\documentclass{amsart}

\usepackage{amscd}
\usepackage{amssymb}
\usepackage{amsmath}
\usepackage{amsfonts}
\usepackage{enumerate}
\usepackage{graphicx}
\usepackage{epic}
\usepackage[all,cmtip]{xy}
\usepackage{overpic}
\usepackage{caption}
\usepackage{tikz}
\usepackage{setspace}

\newtheorem{thm}{Theorem}[section]
\newtheorem{prop}[thm]{Proposition}
\newtheorem{lem}[thm]{Lemma}
\newtheorem{cor}[thm]{Corollary}
\newtheorem{conj}[thm]{Conjecture}

\theoremstyle{definition}

\newtheorem{rem}[thm]{Remark}

\newtheorem*{que*}{Question}
\newtheorem*{rem*}{Remark}
\newtheorem*{nota}{Notation}

\newcommand{\mm}{\mathbf{m}}

\newcommand{\yy}{\mathbf{y}}

\newcommand{\JJ}{\mathcal {J}}

\newcommand{\II}{\mathcal {I}}
\newcommand{\EE}{\mathcal {E}}

\newcommand{\Zz}{\mathbb {Z}}
\newcommand{\Ff}{\mathbb {F}}
\newcommand{\Cc}{\mathbb {C}}

\newcommand{\TT}{\mathcal {T}}

\DeclareMathOperator{\im}{Im}
\DeclareMathOperator{\In}{in}
\DeclareMathOperator{\en}{\overline{in}}

\DeclareMathOperator{\Hom}{Hom}

\newcommand{\lh}{\lhook\joinrel\xrightarrow}
\newcommand{\w}{\widetilde}

\newcommand{\ha}{\hookrightarrow}
\newcommand{\xr}{\xrightarrow}

\numberwithin{equation}{section}
\usepackage[colorlinks,linktocpage]{hyperref}
\hypersetup{citecolor=blue,linkcolor=blue}
\begin{document}
	\title[A counterexample to a conjecture of Adams]{A counterexample to a conjecture of Adams}
	\author[F.~Fan]{Feifei Fan}
	\thanks{This work was supported by the National Natural Science Foundation of China (Grant No. 12271183) and by the GuangDong Basic and Applied Basic Research Foundation (Grant No. 2023A1515012217).}
	\address{Feifei Fan, School of Mathematical Sciences, South China Normal University, Guangzhou, 510631, China.}
	\email{fanfeifei@mail.nankai.edu.cn}
	\subjclass[2020]{55R40, 55T10, 13A50}
	\keywords{classifying spaces, projective unitary groups, elementary abelian $p$-subgroups}
	\maketitle
	\begin{abstract}
		A conjecture due to J. F. Adams says that, for any odd prime $p$, the mod $p$ cohomology ring of the classifying space of a connected compact Lie group is detected by its elementary abelian $p$-subgroups. In this paper,
	we show that the mod $3$ cohomology ring of the classifying space of the projective unitary group $PU(9)$ is not detected by its elementary abelian $3$-subgroups, providing a counterexample to this conjecture. We also obtain many algebraic results as byproducts.
	\end{abstract}
	
	\section{Introduction}\label{sec:introduction}
	
	Let $BG$ be the classifying space of a topological group $G$. The  cohomology of $BG$ with coefficients in $\Zz$ or fields of prime characteristic  plays a significant role in algebraic topology, such as in the theory of characteristic classes. 
	
	Let $G$ be a compact and connected Lie group, and let $p$ be a fixed prime. 
	One may expect that the mod $p$ cohomology classes of $BG$ can be detected by certain well-behaved subgroups of $G$, so that the computation of $H^*(BG;\Ff_p)$ or some sort of cohomology operations on $H^*(BG;\Ff_p)$, such as Steenrod operations, can be easily carried out.
	We say that a class $\alpha\in H^*(BG;R)$ is detected by a subgroup $H\subset G$ if the image of $\alpha$ in $H^*(BH;R)$ is not zero under the natural homomorphism $H^*(BG;R)\to H^*(BH;R)$.
	
	It has been known since the work of Borel \cite{Borel53,Borel55} that if $H^*(BG;\Zz)$ has no $p$-torsion, then $H^*(BG;\Ff_p)$ is a polynomial algebra generated by even degree elements, and it can be detected by one of maximal tori $T_G$ of $G$. That is, the inclusion $T_G\hookrightarrow G$ induces a monomorphism of cohomology rings
	\[H^*(BG;\Ff_p)\hookrightarrow H^*(BT_G;\Ff_p),\] 
	whose image is $H^*(BT_G;\Ff_p)^W$, the subring of invariants under the conjugation $W$-action with $W$ the Weyl group of $G$. 
	
	If $H^*(BG;\Zz)$ has $p$-torsion, the above restriction homomorphism to
	maximal torus has nontrivial kernel, since all odd degree elements of $H^*(BG;\Ff_p)$ can only be mapped trivially to $H^*(BT_G;\Ff_p)$. We are then led to consider the restriction to elementary abelian $p$-subgroups. Suppose that $\EE_p(G)$ is a set of representatives of all
	conjugacy classes of maximal elementary abelian $p$-subgroups of $G$. By Quillen's result \cite[Theorem 6.2]{Qui71}, the kernel of the restriction map 
	\begin{equation}\label{eq:map}
	\Phi:H^*(BG;\Ff_p)\to \prod_{E\in\EE_p(G)}H^*(BE;\Ff_p)
    \end{equation} 
	contains only nilpotent elements for any compact connected Lie group $G$ and any prime $p$. For $p>2$, a stronger conjecture was made by J. F. Adams.
	\begin{conj}[Adams, see \cite{VV05}]\label{conj:Adams}
	Let $G$ be a compact connected Lie group. Then for any odd prime $p$, the map \eqref{eq:map} is injective.
	\end{conj}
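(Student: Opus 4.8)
Since Quillen's theorem \cite{Qui71} already guarantees that $\ker\Phi$ consists only of nilpotent elements, proving Conjecture~\ref{conj:Adams} amounts to showing that this nilpotent kernel actually vanishes, i.e.\ that no nonzero class of $H^*(BG;\Ff_p)$ restricts trivially to every maximal elementary abelian $p$-subgroup. The plan is to control the kernel homotopy-theoretically rather than by direct computation: I would feed $BG$ into a homotopy-colimit decomposition over its elementary abelian $p$-subgroups and read off injectivity from the collapse of the associated spectral sequence.

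Concretely, I would invoke the Jackowski--McClure centralizer decomposition, which for a compact connected Lie group presents the $p$-completion as $BG^\wedge_p\simeq\big(\mathrm{hocolim}_{\Aa}\,BC_G(E)\big)^\wedge_p$, where $\Aa$ is the category of elementary abelian $p$-subgroups $E\subset G$ with morphisms induced by conjugation. The Bousfield--Kan spectral sequence of this homotopy colimit reads
\begin{equation*}
E_2^{s,t}=\varprojlim^{s}_{\Aa}H^t(BC_G(E);\Ff_p)\ \Longrightarrow\ H^{s+t}(BG;\Ff_p),
\end{equation*}
and the restriction map $\Phi$ of \eqref{eq:map} factors through the edge homomorphism onto the $0$-line $\varprojlim^{0}_{\Aa}H^*(BC_G(E);\Ff_p)$ followed by the further restrictions $H^*(BC_G(E);\Ff_p)\to H^*(BE;\Ff_p)$ along $E\subset C_G(E)$. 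A class therefore lies in $\ker\Phi$ only if it has positive filtration in this spectral sequence, or else is a compatible family on the $0$-line that dies under restriction to the subgroups $E$ themselves.

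This splits the problem into two tasks. First, and most importantly, I would try to prove that the higher derived limits $\varprojlim^{s}_{\Aa}H^t(BC_G(E);\Ff_p)$ vanish for all $s>0$, so that the spectral sequence collapses onto its $0$-line and $H^*(BG;\Ff_p)\cong\varprojlim^{0}_{\Aa}H^*(BC_G(E);\Ff_p)$. Here one reduces, via the orbit/Quillen stratification, to local contributions indexed by conjugacy classes of $E$'s, each governed by the cohomology of the finite Weyl-type group $\mathrm{Aut}_{\Aa}(E)=N_G(E)/C_G(E)$ acting on $H^*(BC_G(E);\Ff_p)$; the odd-primary hypothesis is what one hopes to exploit to force these groups to contribute nothing in positive degrees. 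Second, one must check that the surviving $0$-line is still faithfully detected after restricting each $C_G(E)$ to its central subgroup $E$, a detection statement for the centralizers, where $E$ is central and hence relatively accessible.

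I expect the decisive obstacle to be the vanishing of the higher derived limits $\varprojlim^{s}_{\Aa}$ for $s>0$. There is no formal reason for these to vanish once $G$ has $p$-torsion: their size is dictated by the combinatorics of the poset of elementary abelians together with the action of the automorphism groups $\mathrm{Aut}_{\Aa}(E)$, and this interaction is delicate and highly sensitive to arithmetic data. The comparatively soft part of the argument is the passage from the spectral sequence to the statement about $\Phi$; the genuine content---and the step at which any attempted proof must confront the structure of $PU(n)$ when $p^2\mid n$ most carefully---is the control of these limits in positive filtration.
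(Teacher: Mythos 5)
You are proposing a proof of a statement that this paper exists to refute: its title is ``Counterexamples to a conjecture of Adams,'' and Theorem \ref{thm:main} exhibits, for $G=PU(n)$ with $p^2\mid n$, the explicit nonzero class $\alpha_1+\alpha_2=P^1(\chi)\cdot(\beta P^1(\chi))^{p-1}\cdot\chi+P^1(\chi)\cdot P^pP^1(\chi)$ in $\ker\Phi$. So no strategy can close your outline, and it is worth locating exactly where it breaks. Your homotopy-theoretic frame (Jackowski--McClure centralizer decomposition, Bousfield--Kan spectral sequence) is legitimate as far as it goes, and you rightly say injectivity of $\Phi$ is not formal; but you misidentify the decisive obstacle. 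The step you call ``comparatively soft''---checking that classes are still detected after restricting the centralizers to the subgroups $E$ themselves---is precisely what fails. For $PU(n)$ the restriction to a maximal nontoral $E_i=N_i\times A_i$ sends $\chi$ to a nonzero multiple of $\mathbf{s}=\sum_j s_j$ with $s_j=\xi_jb_j-\eta_ja_j$ (the lemma of Section \ref{sec:proof main}), and the image of $\alpha_1+\alpha_2$ is then computed inside M\`ui's ring of $SL_2(\Ff_p)$-invariants (Proposition \ref{prop:mod p invariants}): using $P^1(s)=z$, $\beta(z)=f$, $P^pP^1(s)=w$ from Proposition \ref{prop:s}, each factor contributes a term of the form $z(f^{p-1}s+w)$, which vanishes identically by the relation $z(f^{p-1}s+w)=0$ in that invariant ring. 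The detection failure is thus forced by the internal structure of the invariants of the Weyl group action from Theorem \ref{thm:Weyl group}, not by a higher-limit computation one could hope to repair at odd primes.

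Note also that the arithmetic sensitivity you anticipated cuts the other way: for $n=p$ the class $\alpha_1+\alpha_2$ is itself zero (see the remark after Corollary \ref{cor:independence}, where $xy_{p,0}^p+y_{p,(0,1)}=0$ in $E_\infty^{2p^2+2p+3,0}$), consistent with the Vavpeti\v{c}--Viruel proof \cite{VV05} that Adams' conjecture does hold for $PU(p)$. Establishing that $\alpha_1$ and $\alpha_2$ are linearly independent---hence that the kernel element is genuinely nonzero---requires $p^2\mid n$ and is done in the paper not by homotopy decompositions but by Gu's integral Serre spectral sequence for $BU(n)\to BPU(n)\to K(\Zz,3)$ (Proposition \ref{prop:independence}), with the diagonal map $\bar\Delta\colon BPU(p)\to BPU(n)$ and Vistoli's computation of $H^*(BPU(p))$ controlling the relevant differential $d_{2p^2+1}$. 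If you pursued your program for these groups you would not complete the two tasks you list; the correct conclusion, and the content of this paper, is that the conjecture is false whenever $p^2\mid n$.
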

	
	For a subgroup $E\subset G$, the \emph{Weyl group $W_G(E)$} is defined to be the quotient group $N_G(E)/C_G(E)$ of the normalizer $N_G(E)$ of $E$ by its centralizer $C_G(E)$. In particular, if $E\cong (\Zz/p)^k$, then $W_G(E)$ can be identified with a subgroup of $GL_k(\Ff_p)$ whose elements are induced by conjugation in $G$. Indeed, the restriction map \eqref{eq:map} has image in the subring of invariants $H^*(BE;\Ff_p)^{W_G(E)}$ for each product factor.
	
    In the case of exceptional Lie groups (those involving Conjecture \ref{conj:Adams} are $F_4$, $E_6$, $E_7$, $E_8$, $p=3$ and $E_8$, $p=5$), it has been proved that Conjecture \ref{conj:Adams} holds true for $G=F_4$ \cite{Bro07}, $E_6$, $E_7$ \cite{Kam05,KY08}, $p=3$, and $G=E_8$, $p=5$ \cite{Kam05,KY08}.  
    
    Another interesting class of compact connected Lie groups involving Conjecture \ref{conj:Adams} are \emph{projective unitary groups}  $PU(n)$ obtained as the quotient group of the unitary group of rank $n$, $U(n)$, by its center $S^1=\{e^{i\theta}I_n:\theta\in [0, 2\pi]\}$. Vavpeti\v c and
    Viruel \cite{VV05} proved that Conjecture \ref{conj:Adams} holds for $G=PU(p)$ with $p$ an arbitrary odd prime (the special case $G=PU(3)$ was also proved by Kono and Yagita in an earlier work \cite{KY93}). 
    
    In this paper, we show that Conjecture \ref{conj:Adams} is not true for $PU(9)$. 
	 
	\begin{thm}\label{thm:main}
		Let $\chi$ be a generator of $H^3(BPU(9);\Ff_3)\cong\Ff_3$. Then there exists a class $\alpha\in H^{36}(BPU(9);\Ff_3)$ such that $\chi\alpha\neq 0$ and $\Phi(\chi\alpha)=0$, where  $\Phi$ is the restriction map in \eqref{eq:map} for $p=3$.
	\end{thm}
	This counterexample is based on some calculations of the Serre spectral sequence of the fibration:
	\[
		BU(n)\to BPU(n)\to K(\Zz,3)
	\]
	for $n=9$, and on the invariant theory for actions of finite symplectic groups on polynomial rings. In fact, several intermediate steps in the construction of the counterexample can be generalized to an arbitrary odd prime $p$. This motivates us to make the following conjecture.
	\begin{conj}
	For any odd prime $p$ and any integer $n>0$ with $p^2|n$, the map $\Phi$ in \eqref{eq:map} is not injective for $G=PU(n)$.
	\end{conj}
	
	\subsection*{Organization of the paper} 
	In Section \ref{sec:SS}, we discuss some preliminary results concerning the above Serre spectral sequence. In Section \ref{sec:BPU(p)}, we recall formulas  for the integral and $\Ff_p$-cohomology of $BPU(p)$ from \cite{Vis07} and \cite{Fan26b}, and study some properties of the spectral sequence of the fibration $BPU(p)\to K(\Zz,3)$, together with algebraic applications. The construction of the class $\alpha$ in Theorem \ref{thm:main} relies on a lot of algebraic facts about symmetric polynomials, which are established in Sections \ref{sec:K_n} and \ref{sec:invariants}. 
	In Section \ref{sec:p-subgroups}, the maximal nontoral elementary abelian $p$-subgroups of $PU(n)$ and their Weyl grous are described. Section \ref{sec:symplectic invariants} is a review of the invariant theory of finite symplectic groups $Sp_{2m}(\Ff_p)$. Here $Sp_{2m}(\Ff_p)$ is the Weyl group of a maximal nontoral elementary abelian $p$-subgroup of $PU(p^m)$.  
	The proof of Theorem \ref{thm:main} is given in Sections \ref{sec:BPU9} and \ref{sec:proof}, while some technical results used in the proof are collected in the appendix.
	Section \ref{sec:detect} shows that the class $\chi\alpha$ of $H^*(BPU(9);\Zz)$ 
	($\chi$ and $\alpha$ are actually integral) is detected by an abelian subgroup $(\Zz/9)^2\subset PU(9)$. 
	
	\begin{nota}
		Given a topological space $X$, we use the simplified notation $H^*(X)$ to denote the integral cohomology $H^*(X;\Zz)$.
		For an abelian group $A$ and a prime $p$, let $A_{(p)}$ denote the localization of $A$ at $p$. For a fixed odd prime $p$, let $P^k$ be the $k$th Steenrod power operation and let $\beta$ be the Bockstein in the mod $p$ Steenrod algebra. For a group homomorphism $\pi:G\to G'$, $B\pi:BG\to BG'$ denotes the corresponding map on classifying spaces. 
	\end{nota}

\section{A spectral sequence converging to $H^*(BPU(n))$}\label{sec:SS}
    The short exact sequence of Lie groups
	\[1\to S^1\to U(n)\to PU(n)\to 1\]
	induces  a fibration of their classifying spaces 
	\[BS^1\to BU(n)\to BPU(n).\]
	Note that $BS^1$ is of the homotopy type of the Eilenberg-Mac Lane space $K(\Zz,2)$. So we obtain another induced fibration:
	\begin{equation}\label{eq:fibration}
	BU(n)\to BPU(n)\xr{\chi} K(\Zz,3).
	\end{equation}
    Let $x$ be a generator of $H^3(K(\Zz,3))\cong \Zz$. It is known that $H^i(BPU(n))=0$ for $i=1,2$, and that $H^3(BPU(n))\cong\Zz/n$ is generated by $\chi^*(x)$. For simplicity, we use $\chi$ to denote $\chi^*(x)$. 
    
    Let $E_*^{*,*}(n)$ (or simply $E_*^{*,*}$) be the cohomology Serre spectral sequence  with integer coefficients  associated to \eqref{eq:fibration}. It is of the form
    \begin{equation}\label{eq:Serre}
    	E_2^{s,t}=H^s(K(\Zz,3);H^t(BU(n)))\Longrightarrow H^{s+t}(BPU(n)).
    \end{equation}
    Since $H^*(BU(n))$ is concentrated in even degrees, we have $E_2=E_3$.
    In order to compute this spectral sequence, we first need to know the cohomology of $K(\Zz,3)$.
	 \subsection{The cohomology of $K(\Zz,3)$}\label{subsec:K(Z,3)}
	 The integral cohomology of $K(\Zz,3)$ is described by Gu \cite{Gu21} in details by use of Cartan's method \cite{Car54}. Here, we only consider the $p$-local cohomology for any odd prime $p$. Note that $H^*(-;\Zz_{(p)})\cong H^*(-)_{(p)}$. 
	 \begin{prop}[{\cite[Proposition 2.1]{Gu21b}}]\label{prop:K(Z,3)}
	 	The graded ring $H^*(K(Z,3))_{(p)}$ is generated by $x\in H^3(K(Z,3))_{(p)}\cong \Zz_{(p)}$ and $p$-torsion elements $y_{p,I}$, where the index $I=(i_1,\dots,i_m)$ ranges over all ordered sequences of nonnegative integers $i_1<\cdots<i_m$, and the degree of $y_{p,I}$ is 
	 	\[|y_{p,I}|=1+\sum_{j=1}^m(2p^{i_j+1}+1).\]
	 \end{prop}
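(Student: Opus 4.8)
The plan is to determine $H^*(K(\Zz,3);\Ff_p)$ together with its Bockstein action and then to read off $H^*(K(\Zz,3))_{(p)}$ from the Bockstein spectral sequence. For the mod $p$ computation I would use the path--loop fibration
\[\Cc P^\infty=K(\Zz,2)\to PK(\Zz,3)\to K(\Zz,3)\]
with contractible total space and fiber having $H^*(\Cc P^\infty;\Ff_p)=\Ff_p[u]$, $|u|=2$. In the Serre spectral sequence $u$ transgresses to $\iota_3:=\rho(x)$, the transgression commutes with Steenrod operations (Kudo's theorem), and the Cartan--Serre computation identifies the mod $p$ cohomology as a free graded-commutative algebra. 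Concretely, put $c_0=\iota_3$ and, for $k\ge1$,
\[c_k=P^{p^{k-1}}\cdots P^{p}P^{1}\iota_3\ \ (|c_k|=2p^k+1),\qquad d_k=\beta c_k\ \ (|d_k|=2p^k+2),\]
so that $H^*(K(\Zz,3);\Ff_p)=\Lambda(c_0,c_1,\dots)\otimes\Ff_p[d_1,d_2,\dots]$. A direct instability check confirms completeness of this list: $P^s\iota_3=0$ for $2s>3$, so only $P^1$ acts nontrivially on $\iota_3$, and iterating with admissibility produces exactly the tower $c_k$ and its Bocksteins $d_k$.

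The Bockstein is the derivation with $\beta c_0=0$ and $\beta c_k=d_k$. Hence $(H^*(K(\Zz,3);\Ff_p),\beta)=\Lambda(c_0)\otimes\KK$, where $\KK=\Ff_p[d_1,d_2,\dots]\otimes\Lambda(c_1,c_2,\dots)$ is the Koszul complex on the regular sequence $(d_1,d_2,\dots)$. Since $\KK$ is acyclic, the Bockstein homology is $\Lambda(c_0)$, concentrated in degrees $0$ and $3$ (consistent with $H^*(K(\Zz,3);\Qq)=\Lambda_{\Qq}(x)$). Therefore the Bockstein spectral sequence collapses: the free part of $H^*(K(\Zz,3))_{(p)}$ is $\Zz_{(p)}\{1,x\}$ with $x^2=0$ (as $p$ is odd), and every torsion class has order exactly $p$. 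By the Bockstein exact sequence, $\rho$ carries the torsion isomorphically onto $\im(\beta)$, and $\rho\big(H^*(K(\Zz,3))_{(p)}\big)=\ker\beta=\Ff_p\{1,c_0\}\oplus\im(\beta)$.

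Next I would match the torsion generators. For a sequence $I=(i_1<\cdots<i_m)$ of nonnegative integers set $k_j=i_j+1$ and let $y_{p,I}$ be the order-$p$ torsion class with $\rho(y_{p,I})=\beta(c_{k_1}c_{k_2}\cdots c_{k_m})$. Since $|c_{k_j}|=2p^{i_j+1}+1$ and $\beta$ raises degree by one,
\[|y_{p,I}|=1+\sum_{j=1}^m\big(2p^{i_j+1}+1\big),\]
exactly as claimed; in particular $y_{p,(i)}$ lifts $d_{i+1}=\beta P^{p^{i}}\cdots P^{1}\iota_3$.

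It remains to show that $x$ and the $y_{p,I}$ generate the ring. Since $\rho$ is a ring map that is injective on torsion, it suffices to prove that $\ker\beta$ is generated over $\Ff_p$ by $c_0=\rho(x)$ and the classes $\beta(c_{k_1}\cdots c_{k_m})$. As $\beta$ is a derivation and each $d_k=\beta c_k$ is a cycle, $\beta$ applied to any basis monomial equals a product of $c_0$ with one of the classes $\beta(c_{k_1}\cdots c_{k_m})$ (using that each $d_k$ is itself such a class, the $m=1$ case); hence every element of $\im\beta$ is an $\Ff_p$-combination of products of these generators, and together with $1,c_0$ this exhausts $\ker\beta$. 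Finally I would lift to $\Zz_{(p)}$: if $R'\subseteq H^*(K(\Zz,3))_{(p)}$ is the subalgebra generated by $x$ and the $y_{p,I}$, then $\rho(R')=\rho(R)$ gives $R=R'+pR$ degreewise, and Nakayama's lemma over the local ring $\Zz_{(p)}$ (each graded piece being finitely generated) forces $R=R'$. The principal obstacle is the first step --- carrying out the Cartan--Serre/Kudo computation to pin down the complete generating set and the Bockstein action; once the free algebra and the derivation $\beta$ are in hand, the identification of the $y_{p,I}$ and the generation statement follow formally.
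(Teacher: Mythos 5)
Your proof is essentially correct, but it takes a different route from the paper's source. The paper gives no argument of its own: it quotes the statement from Gu (\cite[Proposition 2.1]{Gu21b}), where the integral result is extracted from Cartan's explicit computation of $H^*(K(\Zz,3))$ by his method of constructions, i.e.\ directly at the integral level via admissible words. You instead start from the mod $p$ ring $\Lambda(c_0,c_1,\dots)\otimes\Ff_p[d_1,d_2,\dots]$ (which the paper also quotes, from Tamanoi/Gu) and deduce the integral statement formally: the Bockstein homology is $\Lambda(c_0)$ because the $(c_k,d_k)$-part is a Koszul complex, so the Bockstein spectral sequence collapses, all torsion has order exactly $p$, $\rho$ is injective on torsion with image $\im\beta$, and the derivation property of $\beta$ shows that $\beta$ of any basis monomial is a product of $c_0$, the $d_k=\rho(y_{p,(k-1)})$ and a class $\beta(c_{k_1}\cdots c_{k_m})=\rho(y_{p,I})$; a degreewise Nakayama argument over $\Zz_{(p)}$ then lifts generation to the integral ring. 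All of these steps check out (existence and uniqueness of the lifts $y_{p,I}=B(x_{p,i_1}\cdots x_{p,i_m})$ follow exactly as you say, and the degree count matches the proposition), and your route buys a proof of the generation statement that needs only the standard mod $p$ computation rather than Cartan's integral machinery. Two caveats: the ``instability check'' you offer for completeness of the generator list is heuristic --- completeness really comes from Borel's transgression theorem (simple system of transgressive generators $u^{p^k}$ plus the Kudo classes), which you should cite rather than argue from $P^s\iota_3=0$; and note that your argument yields only generation, not the relations among the $y_{p,I}$, whereas Cartan's method gives the finer integral structure that the paper later uses in Proposition \ref{prop:K(Z,3) finite dim} --- harmless here, since the proposition in question claims only generation.
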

	 For simplicity, we write $y_{p,i}:=y_{p,(i)}$. We also use $y_{p,I}$ to denote its image in $H^*(BPU(n))$ under the induced homomorphism $H^*(K(\Zz,3))\to H^*(BPU(n))$ whenever there is no ambiguity.
	 
	 Let $x,y_{p,I}$ also denote their mod $p$ reduction in $H^3(K(\Zz,3);\Ff_p)$, and define
	 \[x_{p,k}:=P^{p^k}P^{p^{k-1}}\cdots P^1(x),\ \ k=0,1,2,\dots\]
	 Let $B:H^*(-;\Ff_p)\to H^*(-)$ denote the Bockstein homomorphism associated to the short exact sequence
	 \[0\to\Zz\xr{\cdot p}\Zz\to\Zz/p\to0.\]
	 Then the element $y_{p,I}\in H^*(K(\Zz,3))$ can be explained as 
	 \begin{equation}\label{eq:y_I}
	 	y_{p,I}=(-1)^mB(x_{p,i_1}\cdots x_{p,i_m})\ \text{ for }\  I=(i_1,\dots,i_m).
	 \end{equation}
	 We write \eqref{eq:y_I} with the sign $(-1)^m$ in order to simplify the differential formulas for the spectral sequences in Propositions \ref{prop:mod p K} and \ref{prop:differentials K} below.
	 
	 Notice that $y_{p,I}$ are not algebraically independent  and the relations are given by $B(y_{p,I})=0$. 
	 For example, the first relation appears in degree $2(p^3+p^2+p)+5$. That is,
	 \[-B(y_{p,(0,1,2)})=y_{p,0}y_{p,(1,2)}-y_{p,1}y_{p,(0,2)}+y_{p,2}y_{p,(0,1)}=0.\] 
	 Hence, in the following finite range of degrees, we have a ring isomorphism by Proposition \ref{prop:K(Z,3)}.
	 \begin{prop}\label{prop:K(Z,3) finite dim}
	 	In degree $\leq 2(p^3+p+1)$, $H^*(K(\Zz,3))_{(p)}$ is isomorphic to the following graded ring:
	 	\[\Zz_{(p)}[x,y_{p,0},y_{p,1},y_{p,2},y_{p,(0,1)}]/(x^2,py_{p,0},py_{p,1},py_{p,2},py_{p,(0,1)}).\]
	 \end{prop}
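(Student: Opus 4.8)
The plan is to construct an explicit ring homomorphism from the candidate ring
\[R := \Zz_{(p)}[x, y_{p,0}, y_{p,1}, y_{p,(0,1)}]/(x^2, py_{p,0}, py_{p,1}, py_{p,(0,1)})\]
to $H^*(K(\Zz,3))_{(p)}$ sending each generator to the class of the same name, and to prove it is an isomorphism in degrees $\le N := 2p^2+2p+3$. First I would compute the degrees of the generators supplied by Proposition \ref{prop:K(Z,3)}: from $|y_{p,I}| = 1 + \sum_j (2p^{i_j+1}+1)$ one gets $|y_{p,0}| = 2p+2$, $|y_{p,1}| = 2p^2+2$ and $|y_{p,(0,1)}| = 2p^2+2p+3 = N$, while every remaining generator has degree at least $|y_{p,2}| = 2p^3+2 > N$ (any index set with an entry $\ge 2$, or properly containing $(0,1)$, also exceeds $N$). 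Thus $x, y_{p,0}, y_{p,1}, y_{p,(0,1)}$ are exactly the generators in the relevant range. The map $\phi\colon R \to H^*(K(\Zz,3))_{(p)}$ is well defined because its defining relations hold in the target: $p\,y_{p,I} = 0$ since these classes are $p$-torsion, and $x^2 = 0$ because graded-commutativity gives $2x^2 = 0$ while $2$ is invertible in $\Zz_{(p)}$.

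Surjectivity in the range is immediate: by Proposition \ref{prop:K(Z,3)} the ring $H^*(K(\Zz,3))_{(p)}$ is generated by $x$ and the $y_{p,I}$, and since all generators have positive degree, any monomial of total degree $\le N$ can involve only generators of degree $\le N$, i.e.\ the four generators of $R$. Hence $\phi$ hits every class in degrees $\le N$.

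The main work, and the principal obstacle, is injectivity — that is, ruling out relations beyond $x^2$ and the torsion relations in this range. As a $\Zz_{(p)}$-module, $R$ has a monomial basis $x^\epsilon y_{p,0}^i y_{p,1}^j y_{p,(0,1)}^k$ ($\epsilon\in\{0,1\}$), splitting into the free part $\Zz_{(p)}\{1,x\}$ and the torsion monomials (those with $i+j+k\ge 1$, each of order $p$). For the free part, $1$ and $x$ lie in degrees $0$ and $3$ and $x^2=0$; every other basis monomial involves some $y_{p,I}$ and is therefore torsion, so no free class interferes in the range. For the torsion part I would reduce modulo $p$ and use the known structure of $H^*(K(\Zz,3);\Ff_p)$. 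Since $\rho\circ B = \beta$, one has $\rho(y_{p,0}) = \bar y_{p,0}$, $\rho(y_{p,1}) = \bar y_{p,1}$, and, from $y_{p,(0,1)} = B(x_{p,0}x_{p,1})$ with the derivation formula for $\beta$,
\[\rho(y_{p,(0,1)}) = \bar y_{p,0}\,x_{p,1} - x_{p,0}\,\bar y_{p,1}.\]
Enumerating the torsion monomials of degree $\le N$, the degree bound forces $k\le 1$ with $k=1$ only for $y_{p,(0,1)}$, $j\le 1$ with $j=1$ only for $y_{p,1}$ and $y_{p,1}x$, and otherwise powers $y_{p,0}^i x^\epsilon$ with $1\le i\le p$. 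Their $\rho$-images are pairwise distinct monomials of $\Ff_p[\bar y_{p,0},\bar y_{p,1},\dots]\otimes\Lambda_{\Ff_p}[\bar x, x_{p,0}, x_{p,1},\dots]$ (or, for $y_{p,(0,1)}$, a combination of two monomials not occurring elsewhere), hence linearly independent.

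Finally I would assemble these: any $\Zz_{(p)}$-linear relation among the basis monomials splits into a free and a torsion part; the free part vanishes by the degree/$x^2$ discussion, and applying $\rho$ to the torsion part together with the independence of the $\rho$-images forces every coefficient to be divisible by $p$, so the relation is trivial. Thus $\phi$ is injective, and with surjectivity it is an isomorphism through degree $N$. The delicate point is the bookkeeping of which monomials survive below $N$ and the check that their mod-$p$ reductions stay independent; this is precisely where the bound $2p^2+2p+3$ is sharp, being the first degree in which $y_{p,(0,1)}$ appears while still lying strictly below the degree $2p^3+2$ at which $y_{p,2}$ — and the genuinely complicated relations among the $y_{p,I}$ — would enter.
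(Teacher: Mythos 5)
Your argument is correct: the degree bookkeeping identifying $x$, $y_{p,0}$, $y_{p,1}$, $y_{p,(0,1)}$ as the only generators in the range, and the injectivity check via reduction mod $p$ into the free graded-commutative algebra $\Ff_p[\bar y_{p,0},\bar y_{p,1},\dots]\otimes\Lambda_{\Ff_p}[\bar x,x_{p,0},x_{p,1},\dots]$ (using $\rho(y_{p,(0,1)})=\bar y_{p,0}x_{p,1}-x_{p,0}\bar y_{p,1}$), together establish the claim. The paper states this proposition without proof, treating it as a routine consequence of the cited descriptions of $H^*(K(\Zz,3))_{(p)}$ and $H^*(K(\Zz,3);\Ff_p)$; your write-up is exactly the verification being left implicit.
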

	 
	 The mod $p$ cohomology ring of $K(\Zz,3)$ is easier to describe. 
	 
	 \begin{prop}[See {\cite[Theorem 3.4]{Tam99}}]\label{prop:K(Z,3) mod p}
	 	For any odd prime $p$, we have
	 	\[H^*(K(\Zz,3);\Ff_p)\cong \Ff_p[y_{p,0}, y_{p,1},\dots]\otimes\Lambda_{\Ff_p}[x,x_{p,0},x_{p,1},\dots].\]
	 \end{prop}
	
	\subsection{The differential $d_3$ in $E_*^{*,*}$}\label{subsec:d_3}
Recall that the cohomology ring of  $BU(n)$ is
\[H^*(BU(n))=\Zz[c_1,\dots,c_n],\ |c_i|=2i.\]
Here $c_i$ is the $i$th universal Chern class of the classifying space $BU(n)$ for $n$-dimensional complex bundles. For a maximal torus $T^n$ of $U(n)$,  let
 $\psi:T^n\to U(n)$ be the inclusion .
Then we have 
\[B\psi^*(c_i)=\sigma_i\in H^*(BT^n)\cong \Zz[v_1,\dots,v_n],\] where $\sigma_i$ is the $i$th elementary symmetric polynomial in the variables $v_1,\dots,v_n$.

For an integer $n>0$, let $\Lambda_n$ be the ring of symmetric polynomials over $\Zz$ in $n$ variables. It is known that $\Lambda_n=\Zz[\sigma_1,\dots,\sigma_n]$.  

Let $\nabla=\sum_{i=1}^n\partial/\partial v_i$ be the linear differential operator acting on the polynomial ring $\Zz[v_1,\dots,v_n]$.
$\nabla$ clearly preserves symmetric polynomials, so $\nabla$ can be restricted to $\Lambda_n$. For $\sigma_k\in \Lambda_n$ (with the convention that $\sigma_0=1$), one checks that
\begin{equation}\label{eq:nabla}
\nabla(\sigma_k)=(n-k+1)\sigma_{k-1}.	
\end{equation}

The differential $d_3$ of the $E_3$-page of the spectral sequence \eqref{eq:Serre} can be described as follows:
\begin{prop}[{\cite[Corollary 3.10]{Gu21}}]\label{prop:d_3}
	For $f\in E_3^{0,*}\cong H^*(BU(n))\cong \Lambda_n$, $\xi\in E_3^{*,0}\cong H^*(K(\Zz,3))$, we have
\[
		d_3(f\xi)=\nabla(f)x\xi.
\]
\end{prop}

Let $K_n$ be the kernel of the map $\nabla:\Lambda_n\to\Lambda_n$.
The following result of Crowley and Gu gives a nice description of the cohomology ring of $BPU(n)$ modulo torsion elements.
\begin{thm}[{\cite[Theorem 1.3]{CG26}}]\label{thm:E_4}
	Let $\pi:U(n)\to PU(n)$ be the quotient map, and let 
	$W$ be the Weyl group of $PU(n)$. Then we have 
	\[K_n\cong E_4^{0,*}=E_\infty^{0,*}\cong \im (B\pi)^*\cong H^*(BT_{PU(n)})^W\cong H^*(BPU_n)/torsion.\]
\end{thm}

\subsection{Spectral sequences $^TE_*^{*,*}$ and $^KE_*^{*,*}$}\label{subsec:T K}
 To describe some higher differentials of the spectral sequence $E_*^{*,*}$, we consider two other spectral sequences.
Let $T^n$ and $T^{n-1}$ be the maximal tori of $U(n)$ and $PU(n)$ respectively.
Then there is an exact sequence of Lie groups 
\[1\to S^1\to T^n\to T^{n-1}\to 1,\]
which induces a fibration
\begin{equation}\label{eq:T}
	T:BT^n\to BT^{n-1}\to K(\Zz,3).
\end{equation}
We also consider the path fibration
\begin{equation}\label{eq:K}
	K: BS^1\simeq K(\Zz,2)\to *\to K(\Zz,3),
\end{equation}
where $*$ denotes a contractible space. We denote the cohomological Serre spectral
sequences with integer coefficients associated  to $T$ and $K$ as $^TE_*^{*,*}$ and $^KE_*^{*,*}$ respectively, and write $^Td_*$, $^Kd_*$ for the differentials of $^TE_*^{*,*}$ and $^KE_*^{*,*}$, respectively. If there is no ambiguity, we simply denote the differentials by $d_*$ 

We know that the $R$-cohomology ring of $BS^1\simeq\Cc P^{\infty}$ is $R[v]$, $ |v|=2$. 
The following result is an easy consequence of Kudo transgression theorem \cite{Kud56}.
\begin{prop}\label{prop:mod p K}
For any odd prime $p$, the differentials $d_r$ ($r\geq 2$) of the mod $p$ cohomology Serre spectral sequence associated to \eqref{eq:K} are given by the Leibniz rule and by the following rules on generators:
\begin{enumerate}[(a)]
	\item $d_3(v)=x$.
	\item $d_{2p^k+1}(v^{p^k})=x_{p,k-1}$, $k\geq 1$. 
	\item $d_{2p-1}(xv^{p-1})=y_{p,0}$.
	\item $d_{2p^k(p-1)+1}(x_{p,k-1}v^{p^k(p-1)})=y_{p,k}$, $k\geq 1$.
	\item $d_r(x)=d_r(x_{p,k})=d_r(y_{p,k})=0\text{ for all }r\geq2,\,k\geq0$.
\end{enumerate}
\end{prop}

In \cite{Gu21}, Gu shows that localized at each odd prime $p$, the differentials of the spectral sequence $^KE$ can be described as follows. 

\begin{prop}[{\cite[Corollary 2.16]{Gu21}}]\label{prop:differentials K}
	Let $p$ be an odd prime. Then the differentials $d_r$ ($r\geq 2$) of $^KE^{*,*}_*$ localized at $p$ are given by the Leibniz rule and by the following rules on generators:
\begin{enumerate}[(a)]
		\item $d_3(v)=x$.
		\item $d_{2p^{k+1}-1}(p^kxv^{lp^j-1})=v^{lp^j-p^{k+1}}y_{p,k},\ j>k\geq 0,\ l>0,\ \gcd(l,p)=1$.
		\item\label{item:c} For each $I=(i_1,\dots,i_m)$ with $0<i_1<\cdots<i_m$ and all $k$ with $0< k\leq i_1$, all $l>0$, 
		\begin{gather*}
			d_{2p^k+1}(y_{p,I}v^{lp^k})=ly_{p,I'}v^{(l-1)p^k},\\
			d_{2p^k(p-1)+1}(y_{p,I'}v^{lp^{k+1}-p^k})=y_{p,k}y_{p,I}v^{(l-1)p^{k+1}},	 
		\end{gather*}
		where $I'=(k-1,i_1,\dots,i_m)$.
		\item $d_r(x)=d_r(y_{p,k})=0\text{ for all }r\geq2,\,k\geq0$.
	\end{enumerate}
\end{prop}
\begin{rem}
It should be pointed out that the differential rules in the original reference \cite[Corollary 2.16]{Gu21} are incomplete. More precisely, for the first equation in Proposition \ref{prop:differentials K} \eqref{item:c} only the case $l=1$ is included in \cite[Corollary 2.16]{Gu21}, and the second equation in \eqref{item:c} is omitted there. Indeed, all differentials in Proposition \ref{prop:differentials K} \eqref{item:c} can be deduced from Proposition \ref{prop:mod p K} by using \eqref{eq:y_I} and  the natural map from $^KE^{*,*}_*$ to the mod $p$ cohomology Serre spectral sequence associated to \eqref{eq:K}.
\end{rem}

To describe the differential of $^TE^{*,*}_*$, we first rewrite  $H^*(BT^n)=\Zz[v_1,\dots,v_n]$ as $\Zz[v_1-v_n,\dots,v_{n-1}-v_n,v_n]$. Let $v_j'=v_j-v_n$ for $1\leq j\leq n-1$. Then an element in $^TE^{*,*}_*$ can be written as $f(v_n)\xi$, where $\xi\in H^*(K(\Zz,3))$ and
$f(v_n)=\sum_ia_iv_n^i$ is a polynomial in $v_n$ with coefficients $a_i\in\Zz[v_1',\dots,v_{n-1}']$. Write 
$a_i=\sum_{t_1,\dots,t_{n-1}}k_{i,t_1,\dots,t_{n-1}}(v'_1)^{t_1}\cdots (v_{n-1}')^{t_{n-1}}$, $k_{i,t_1,\dots,t_{n-1}}\in\Zz$.
\begin{prop}[{\cite[Remark 3.7]{Gu21}}]\label{prop:differential T}
	In the above notation, the differentials of $^TE^{*,*}_*$  are given by 
	\[^Td_r(f(v_n)\xi)=\sum_i\sum_{t_1,\dots,t_{n-1}} {^K}d_r(k_{i,t_1,\dots,t_{n-1}}\xi v_n^i)\cdot(v'_1)^{t_1}\cdots (v_{n-1}')^{t_{n-1}},\]
	where ${^K}d_r(k_{i,t_1,\dots,t_{n-1}}\xi v_n^i)$ is simply ${^K}d_r(k_{i,t_1,\dots,t_{n-1}}\xi v^i)$ with $v$ replaced by $v_n$. 
\end{prop}
	
\subsection{The differential $d_{2p-1}$ in $E_*^{*,*}$}\label{sec:differential}
	In this subsection, we assume that $p$ is an odd prime and $n>0$ is an integer with $p|n$, and for simplicity we will use $E_*^{*,*}$ and $^TE_*^{*,*}$ to denote their  $p$-localizations. 
		\begin{prop}\label{prop:second nontrivial}
		The second possible nontrivial differential of $E_*^{*,*}$ is $d_{2p-1}$.
	\end{prop} 
	\begin{proof}
		Theorem \ref{thm:E_4} shows that $d_r$ on $E_r^{0,*}$ are zero for all $r\geq 4$. Hence, the second possible nontrial differential of $E_*^{*,*}$ is of the form
		\[d_r:E_r^{s,t}\to E_r^{s+r,t-r+1},\ \ r\geq4,\ s\geq 3.\]
		Since the second nontrivial even column of $E_2^{*,*}$ is 
		\[E_2^{2p+2,*}\cong y_{p,0}\cdot H^*(BU(n)),\]
		the first possible nontrivial differential originating at $E_*^{3,*}$ is \[d_{2p-1}:E_{2p-1}^{3,*}\to E_{2p-1}^{2p+2,*}.\] 
		So we need only consider the cases when $s>3$. 
		
		Let $\II$ be the subring (without unit) of the reduce cohomology $\w H^*(K(\Zz,3))_{(p)}$ generated by all elements of the form $y_{p,I}$. By Propositions \ref{prop:K(Z,3)} and \ref{prop:K(Z,3) mod p}, the $p$-local cohomology of $K(\Zz,3)$ has a decomposition:
		\[H^*(K(\Zz,3))_{(p)}=\Zz_{(p)}\oplus\Zz_{(p)}\{x\}\oplus\II\oplus x\II.\]
		Then Proposition \ref{prop:d_3} implies that 
		\begin{equation}\label{eq:decomposition}
			E_4^{*,*}\cong (K_n)_{(p)}\oplus x(C_n)_{(p)}\oplus(\II\otimes K_{n,p})\oplus(x\II\otimes C_{n,p}),
		\end{equation}
		where $C_n$ is the cokernel of the map $\nabla:\Lambda_n\to \Lambda_n$, and $K_{n,p}$, $C_{n,p}$ are the kernel and cokernel of the map $\nabla\otimes\Ff_p:\Lambda_n\otimes\Ff_p\to \Lambda_n\otimes\Ff_p$, respectively.

		Let $_pE_*^{*,*}$ denote the mod $p$ cohomology Serre spectral sequence associated to \eqref{eq:fibration}. Then it is easy to see that there is an embedding
		\[(\II\otimes K_{n,p})\oplus(x\II\otimes C_{n,p})\cong E_4^{>3,*}\hookrightarrow {_pE}_4^{>3,*}\cong (\JJ\otimes K_{n,p})\oplus(x\JJ\otimes C_{n,p}),\] 
		where $\JJ$ is the subring of $\w H^*(K(\Zz,3);\Ff_p)$ generated by $x_{p,i}$, $y_{p,i}$, $i\geq 0$.
		Hence, it suffices to prove that 
		\[d_r=0:{_pE}_r^{>3,*}\to {_pE}_r^{>3,*}\ \text{ for }\ 4\leq r\leq 2p-2.\]  
		We prove this for the two summands $(\JJ\otimes K_{n,p})$ and $(x\JJ\otimes C_{n,p})$ separately. 
		
		Note that the second nontrivial odd column of ${_pE}_2^{*,*}$ is 
		\[{_pE}_2^{2p+1,*}\cong x_{p,0}\cdot H^*(BU(n);\Ff_p).\]
		Hence, for any $f\in {_pE}_4^{0,*}\cong K_{n,p}$, we have $d_r(f)=0$ for $4\leq r\leq 2p$. Then the Leibniz rule shows that when restricted to the summand $(\JJ\otimes K_{n,p})$, $d_r=0$ for $4\leq r\leq 2p$. The statement that when restricted to $(x\JJ\otimes C_{n,p})$, $d_r=0$ for $4\leq r\leq 2p-2$  follows similarly from the Leibniz rule and the fact that the first possible nontrivial differential originating at $_pE_*^{3,*}\cong xC_{n,p}$  is $d_{2p-1}$.
	\end{proof}
	Next, we will give a description of the differential $d_{2p-1}$ in $E_*^{*,*}$ in terms of a linear operator on symmetric polynomials.
	
	To simplify notation, for the rest of this paper, we use $R[n]$ to denote the polynomial ring $R[v_1,\dots,v_n]$ with coefficient ring $R$. For a polynomial $h(v_1,\dots,v_n)\in \Zz[n]$ of degree $d$ and an indeterminate $x$, write
	\[h(v_1+x,\dots,v_n+x)=h_0+h_1x+\cdots+h_dx^d,\quad h_i\in\Zz[n].\]
	For an integer $i\geq 0$, define the map $\Gamma_i:\Zz[n]\to \Zz[n]$ as $\Gamma_i(h)=h_i$.  Clearly, $\Gamma_1=\nabla$. 
	It is easy to see that  $\Gamma_i$ presevers symmetric polynomials, so $\Gamma_i$ restricts to $\Lambda_n$. 
	For $\sigma_k\in\Lambda_n$, $1\leq k\leq n$, one easily computes that (cf. \cite[Proposition 3.2]{Tod87})
	\begin{equation}\label{eq:gamma}
		\Gamma_i(\sigma_k)=\binom{n+i-k}{i}\sigma_{k-i},\quad 0\leq i\leq k.
	\end{equation}

	Let $\Gamma=\Gamma_0+\Gamma_1+\cdots$. By definition, the operator $\Gamma$ is multiplicative, i.e., $\Gamma(ab)=\Gamma(a)\Gamma(b)$ for any $a,b\in \Zz[n]$, so we have
	\begin{equation}\label{eq:mult of gamma}
		\Gamma_k(ab)=\sum_{i+j=k}\Gamma_i(a)\Gamma_j(b)\text{ for }a,b\in \Zz[n].
	\end{equation}
	Then, for any $h\in \Zz[n]$ and any prime $p$, we have
	\begin{equation}\label{eq:pth power}
		\Gamma_k(h^p)\equiv0 \bmod p,\ \ k\not\equiv 0\bmod p.
	\end{equation}
	
	\begin{lem}\label{lem:composition of gamma}
		$\Gamma_i\circ\Gamma_j={i+j\choose i}\Gamma_{i+j}$ for any $i,j\geq0$.
	\end{lem}
	\begin{proof}
		Given $h(v_1,\dots,v_n)\in \Zz[n]$ and two other indeterminates $x,y$, by definition,
		\[\begin{split}
			h(v_1+x+y,\cdots,v_n+x+y)=&\sum_{j\geq 0}\Gamma_j(h)(v_1+x,\dots,v_n+x)y^j\\
			=&\sum_{i\geq 0}\sum_{j\geq 0}\Gamma_i(\Gamma_j(h))x^iy^j.
		\end{split}\] 
		On the other hand, we have
		\[\begin{split}
			h(v_1+x+y,\cdots,v_n+x+y)=&\sum_{k\geq 0}\Gamma_k(h)(x+y)^k\\
			=&\sum_{k\geq 0}\big[\Gamma_k(h)\sum_{i=0}^k{k\choose i}x^iy^{k-i}\big].
		\end{split}\]
		Comparing the two equations above, we get the desired conclusion.
	\end{proof}
	Let $\w \Gamma_i$ denote the map $\Gamma_i\otimes\Ff_p:\Ff_p[n]\to \Ff_p[n]$. Lemma \ref{lem:composition of gamma} has the following
	immediate corollary.
	\begin{cor}\label{cor:zero compsoition}
		$\w\Gamma_i\circ\w\Gamma_{p-i}=0$ for $0<i<p$. 
	\end{cor}
	
	Note that by Proposition \ref{prop:second nontrivial}, $E^{3,*}_{2p-1}=E_4^{3,*}\cong (\Lambda_n/\im\nabla|_{\Lambda_n})_{(p)}$.
	\begin{prop}\label{prop:differential 2p-1}
		The differential $d_{2p-1}:E^{3,*}_{2p-1}\to E^{2p+2,*}_{2p-1}$ is given by 
		\[d_{2p-1}(xf)=y_{p,0}\w\Gamma_{p-1}(f),\]
		where $f\in(\Lambda_n)_{(p)}$ is an arbitrary representative in the coset $(f\im\nabla|_{\Lambda_n})_{(p)}$.
	\end{prop}
	Proposition \ref{prop:differential 2p-1} can determine $d_{2p-1}^{s,*}$ for $s>3$ as follows. By Proposition \ref{prop:second nontrivial} and the decomposition \eqref{eq:decomposition}, an element $\alpha\in E^{>3,*}_{2p-1}$ can be written as 
	\[\alpha=x\sum_i\xi_if_i+\sum_j\eta_jh_j,\]
	where $\xi_i,\,\eta_j\in\II$ and $f_i\in C_{n,p}$, $h_j\in K_{n,p}$. From the last paragraph of the proof of Proposition \ref{prop:second nontrivial}, we know that $d_{2p-1}(\eta_jh_j)=0$. Then by the Leibuniz rule,
	\begin{equation}\label{eq:d_2p-1}
		d_{2p-1}(\alpha)=y_{p,0}\w\Gamma_{p-1}(f)\sum_i\xi_i.
	\end{equation}

The proof of \ref{prop:differential 2p-1} will use the following lemma. In the polynomial ring $R[n]$ ($R$ an arbitrary ring), let $v_i'=v_i-v_n$ for $1\leq i\leq n$, and for $f=f(v_1,\dots,v_n)\in R[n]$, define $D_n(f)=f(v_1',\dots,v_n')$. 
\begin{lem}\label{lem:D_m}
	For $f\in R[n-1]\subset R[n]$, $D_n(f)=0$ if and only if $f=0$.
\end{lem}
\begin{proof}
	This follows from the obvious fact that the homomorphism $R[n-1]\to R[n]$, $v_i\mapsto v_i'$, $1\leq i\leq n-1$, is injective.
\end{proof}
\begin{proof}[Proof of Propsition \ref{prop:differential 2p-1}]
	The inclusions $T^n\to U(n)$ and $T^{n-1}\to PU(n)$ induces a homotopy commutative diagram of fibrations:
	\[
			\xymatrix{
				BT^n\ar[r] \ar[d]&BT^{n-1}\ar[r] \ar[d]&K(\Zz,3)\ar[d]^{=}\\
				BU(n)\ar[r]&BPU(n)\ar[r]&K(\Zz,3)}
	\]
	Consider the following spectral sequence map induced by this fibration map
	\[\xymatrix{E_*^{*,*}\ar^{d_*}[r]\ar[d] &E_*^{*,*}\ar[d]\\
			{^TE}_*^{*,*}\ar^{^Td_*}[r] &{^TE}_*^{*,*}}\]
	It is easy to see that the differential $d_{2p-1}:E_{2p-1}^{3,*}\to E_{2p-1}^{2p+2,*}$ is determined by 
	$^Td_{2p-1}:{^TE}_{2p-1}^{3,*}\to {^TE}_{2p-1}^{2p+2,*}$ since $E_2^{*,*}\to {^T}E_2^{*,*}$ is injective and ${^TE}_2^{2p+2-r,*}=0$ for $2\leq r<2p-1$.
	Hence, it suffices to verify that for any $f\in\Zz_{(p)}[n]$,
	\[^Td_{2p-1}(xf)=y_{p,0}\w\Gamma_{p-1}(f).\]
	
	Consider the commutative diagram of fibrations given in \eqref{eq:T}:
	\[\begin{gathered}
		\xymatrix{ BT^{n+1}\ar[r] \ar[d]&BT^n\ar[r] \ar[d]&K(\Zz,3)\ar[d]^{=}\\
			BT^n\ar[r]&BT^{n-1}\ar[r] &K(\Zz,3)}
	\end{gathered}\]
	where the left-hand vertical map is induce by the projection of $T^{n+1}$ onto its first $n$ factors.
	Let ${^TE}_*^{*,*}(n+1)$ and ${^TE}_*^{*,*}(n)$ denote the $p$-local Serre spectral sequences of the first and second fibrations, respectively. Given $xf\in {^TE}_{2p-1}^{3,*}(n)$ ($f\in\Zz_{(p)}[n]$), suppose that 
	\[d_{2p-1}(xf)=y_{p,0}h\in {^TE}_{2p-1}^{2p+2,*}(n),\ h\in\Ff_p[n].\]
	Then $D_n(h)=D_n\w\Gamma_{p-1}(f)$ by Propositions \ref{prop:differentials K}, \ref{prop:differential T} and the definitons of $D_n$ and $\w\Gamma_{p-1}$, noting that
	\[f=f(v_1'+v_n,\dots,v_n'+v_n)=\sum_{k}(D_n\Gamma_k(f))v_n^k.\]
	
	Using the spectral sequence map ${^TE}_*^{*,*}(n)\to {^TE}_*^{*,*}(n+1)$ and thinking of $f,h$ as elements of $\Zz_{(p)}[n+1]$ and $\Ff_p[n+1]$ respectively, we obtain $d_{2p-1}(xf)=y_{p,0}h$ in ${^TE}_{2p-1}^{2p+2,*}(n+1)$. So,  $D_{n+1}(h)=D_{n+1}\w\Gamma_{p-1}(f)$ in $\Ff_p[n+1]$ by the same reasoning as before, hence lemma \ref{lem:D_m} gives $h=\w\Gamma_{p-1}(f)$, and the proof is complete.
\end{proof}

\section{On the cohomology ring of $BPU(p)$}\label{sec:BPU(p)}
In this section, we assume that $p$ is an odd prime. In \cite{Vis07}, Vistoli provides a description of the integral cohomology ring of $BPU(p)$ (see also \cite{Fan26b}). 

We denote by $T^p$ the standard maximal torus in $U(p)$ consisting of diagonal
matrices, $A_p$ the algebraic
group of $p$th roots of $1$ over $\Cc$. Consider the embedding 
\[A_p\hookrightarrow T^p,\ \ \omega\mapsto (\omega,\omega^2,\dots,\omega^{p-1},1),\ \omega=e^{2\pi i/p}.\]
This map induces a  homomorphism  
\[\Theta_p:H^*(BT^p)\cong \Zz[v_1,\dots,v_p]\to\Zz[\eta]/\langle p\eta\rangle\cong H^*(BA_p),\ v_i\mapsto i\eta.\] 
Let $I_p$ be the kernel of the restriction of $\Theta_p$ to $K_p\subset H^*(BT^p)$. 

\begin{thm}[{\cite[Theorem 3.4]{Vis07}}]\label{thm:vistoli}
	For any prime $p>2$, the integral cohomology ring of $BPU(p)$ is given by
	\[ H^*(BPU(p))\cong\frac{K_p\otimes \Zz[\chi,\gamma]}{\langle\chi^2,\,p\chi,\,p\gamma,\,\chi I_p,\,\gamma I_p\rangle},\]
	where $\chi=\chi^*(x)$ and $\gamma=-\chi^*(y_{p,0})$ for the map $\chi:BPU(p)\to K(\Zz,3)$.
\end{thm}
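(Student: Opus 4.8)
The plan is to reconstruct $H^*(BPU(p))$ from Gu's Serre spectral sequence \eqref{eq:Serre}, isolating its torsion-free part via Theorem~\ref{thm:E_4} and computing the $p$-primary torsion by hand. Since the torsion of $H^*(BPU(p))$ is concentrated at $p$ (controlled by the $\Zz/p$ coming from the central $S^1$), it suffices to work $p$-locally, where the $E_3$-page reads $E_3^{*,*}\cong H^*(K(\Zz,3))_{(p)}\otimes_{\Zz_{(p)}}\Lambda_p$. By Proposition~\ref{prop:d_3} the only nonzero differential in low total degree is $d_3=(x\,\cdot)\otimes\nabla$, i.e. $d_3(\xi\otimes f)=\pm\,x\xi\otimes\nabla(f)$; because $x^2=0$ after localizing at the odd prime $p$ we indeed have $d_3^2=0$, and the whole computation reduces to understanding $\ker$ and $\coker$ of $\nabla$ against multiplication by $x$ on $H^*(K(\Zz,3))_{(p)}$.

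I would then read off $E_4=E_\infty$ column by column. The column $s=0$ gives $E_\infty^{0,*}=\ker\nabla=K_p$, which by Theorem~\ref{thm:E_4} is exactly the torsion-free quotient $H^*(BPU(p))/torsion$. In the column $s=3$, where $H^3(K(\Zz,3))_{(p)}=\Zz_{(p)}\cdot x$, the outgoing $d_3$ vanishes since $H^6(K(\Zz,3))_{(p)}=0$ for $p\ge3$, so $E_\infty^{3,*}\cong x\cdot\coker(\nabla\colon\Lambda_p\to\Lambda_p)$; its bottom class is the order-$p$ generator $\chi\in H^3$, recovered from $\nabla(\sigma_1)=p$. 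The first torsion generator $y_{p,0}$ of $H^*(K(\Zz,3))_{(p)}$, in degree $2p+2$, produces in the same fashion a second family of torsion classes whose bottom class I name $\gamma$ (expected to coincide, up to lower filtration, with $\chi^*(y_{p,0})$). The claim to prove is that $\chi$ and $\gamma$, together with $K_p$, generate $E_\infty$ multiplicatively: the higher generators $y_{p,I}$ of $H^*(K(\Zz,3))_{(p)}$ must contribute nothing indecomposable to the abutment, being absorbed into products of $\chi$ and $\gamma$.

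The relations are then established directly. We have $p\chi=p\gamma=0$ because both classes are $p$-torsion, and $\chi^2=0$ descends from $x^2=0$ in $H^*(K(\Zz,3))_{(p)}$. The two module relations $\chi K_p'=0$ and $\gamma K_p'=0$ are the substantive ones; I would prove them by restriction to the non-toral subgroup $\Ff_p^2\subset PU(p)$ generated by the image of $\omega\mapsto(\omega,\omega^2,\dots,\omega^{p-1},1)$ together with a cyclic permutation matrix. On $BT^p$ both $\chi$ and $\gamma$ restrict to $0$ (an odd-degree, respectively torsion, class dies on the torus), while on $B\Ff_p^2$ the restriction of $f\in K_p$ is computed by $\Theta'$; since $K_p'=\ker\Theta'$ by definition, for $f\in K_p'$ the products $\chi f$ and $\gamma f$ restrict trivially to every detecting subgroup, and Quillen's nilpotence of the kernel together with the explicit torsion computation forces them to vanish on the nose.

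The main obstacle, and the part demanding real work, is to show that these relations are \emph{exhaustive}, i.e. that the evident surjection from the presented ring onto $H^*(BPU(p))$ is injective. This splits into resolving the additive and multiplicative extension problems in passing from $E_\infty$ to $H^*(BPU(p))$, and a degreewise rank count matching $\dim_{\Ff_p}$ of the presented ring against the spectral-sequence output. Concretely, I would need to identify the cokernel $\coker\nabla$ appearing in the $s=3$ column (and its analogue in the $\gamma$-column) with $K_p/K_p'\cong\im\Theta'$, so that the $\chi$- and $\gamma$-torsion is precisely a copy of $K_p/K_p'$ in each relevant degree. Verifying this identification of $\coker\nabla$ with the restriction image $\im\Theta'$, and checking that no higher differential or hidden extension disturbs the count, is where the bulk of the difficulty lies.
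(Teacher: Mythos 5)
This statement is not proved in the paper at all: it is quoted as an external input from Vistoli \cite{Vis07}, whose argument goes through equivariant Chow rings and a stratification of the adjoint $PU(p)$-action rather than the Serre spectral sequence. Indeed the logical direction of Section \ref{sec:BPU_p} is the reverse of yours: the author takes Vistoli's answer as given and uses it to deduce what $E_*^{*,*}(BPU(p))$ must look like in a range of degrees, precisely because running the spectral sequence forward is not tractable.

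As a self-contained proof, your plan has a concrete step that fails. You assume that after $d_3$ the spectral sequence degenerates (``$E_4=E_\infty$, read off column by column''), with only the $d_3=x\otimes\nabla$ differential to control. This is false for $BPU(p)$: the Remark following Corollary \ref{cor:independence} records that the class $xy_{p,0}^p+y_{p,(0,1)}$, which survives to $E_{2p^2+1}^{2p^2+2p+3,0}$, is killed by a nontrivial higher differential $d_{2p^2+1}$ originating at $y_{p,0}c_p^p\in E_{2p^2+1}^{2p+2,2p^2}$ (the odd-prime analogue of \cite[Lemma 3.6]{FZZZ24}). Without controlling such differentials you cannot conclude that the generators $y_{p,I}$ ``contribute nothing indecomposable,'' and that is exactly the step you defer; it is not a bookkeeping issue but the substance of the theorem. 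The relation arguments are also incomplete as stated: Quillen's theorem only says the kernel of restriction to elementary abelian subgroups consists of nilpotent elements, so showing that $\chi f$ and $\gamma f$ restrict to zero for $f\in K_p'$ does not force them to vanish ``on the nose''; you would need the injectivity of $\Phi$ for $PU(p)$ from \cite{VV05} together with a Bockstein argument to lift the conclusion from $\Ff_p$-coefficients to the integral torsion, or else Vistoli's own algebraic computation. In short, the proposal is a program whose unresolved steps (higher differentials, extension problems, exhaustiveness of the relations) constitute the theorem itself.
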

The author \cite{Fan26b} provides a similar formula for the mod $p$ cohomology of $BPU(p)$. We follow the notation of Theorem \ref{thm:vistoli} and let $L_p$ be the subring of $H^*(BT^p)$ generated by $K_p$ and $\sigma_1$.
\begin{thm}[{\cite[Theorem 1.3]{Fan26b}}]\label{thm:mod p BPU(p)}
	For any prime $p>2$, there is a grade isomorphism
	\[ H^*(BPU(p);\Ff_p)\cong \frac{L_p\otimes \Ff_p[\gamma]\otimes\Lambda_{\Ff_p}[\chi,\zeta]}{\langle \sigma_1\chi,\,\sigma_1\zeta,\,\sigma_1\gamma+\chi\zeta,\,\chi I_p,\,\zeta I_p,\,\gamma I_p\rangle},\]
	where $\zeta=\chi^*(x_{p,0})$.
\end{thm}

For the element $\delta=\prod_{i\neq j}(v_i-v_j)\in K_p$, one easily checks that 
\begin{equation}\label{eq:Theta(delta)}
	\Theta_p(\delta)=-\eta^{p^2-p}\neq 0.
\end{equation}
It is shown in \cite[Proposition 3.1]{Vis07} that the image of $\Theta_p|_{K_p}$ is the subring generated by $\Theta_p(\delta)$. Hence, as a $\Zz$-module, 
\begin{equation}\label{eq:module structure}
	H^*(BPU(p))\cong K_p\oplus \big((\Lambda_{\Ff_p}[\chi]\otimes\Ff_p[\gamma])^+\otimes\Ff_p[\delta]\big).
\end{equation}

We also consider the integral cohomology Serre spectral sequence $E_*^{*,*}$ of the fibration $BPU(p)\to K(\Zz,3)$. By the theory of Serre spectral sequence, for $n\geq 0$ we have a filtration of abelian groups 
\[0\lh{E_\infty^{n,0}}F_n^n\lh{E_\infty^{n-1,1}}F_{n-1}\ha\cdots\ha F_1^n\lh{E_\infty^{0,n}}F_0^n=H^n(BPU(p)),\]
where $F_{k+1}^n\lh{E_\infty^{k,n-k}}F_k$ means that there is a short exact sequence
\[0\to F_{k+1}^n\to F_k\to E_\infty^{k,n-k}\to 0.\]
Theorem \ref{thm:E_4} shows that  the kernel of the surjection $H^*(BPU(p))\to E_\infty^{0,*}$ is the subgroup $\TT$ consisting of torsion elements. 
Since the first nontrivial column of $E_*^{>0,*}$ is $E_*^{3,*}$, there is a surjection $\phi:\TT\to E_\infty^{3,*}$. Note that \eqref{eq:module structure} gives a decomposition 
\[\TT=(\chi\Ff_p[\delta])\oplus(\Ff_p[\gamma]^+\otimes \Ff_p[\delta])\oplus(\chi\Ff_p[\gamma]^+\otimes\Ff_p[\delta]).\]
\begin{prop}\label{prop:the map phi}
	The surjection $\phi:\TT\to E_\infty^{3,*}$ is given by
	\[\phi(\chi\delta^k)=x\delta^k,\ \ \phi(\gamma^i\delta^k)=\phi(\chi\gamma^i\delta^k)=0,\ \ i>0,\,k\geq 0.\]
	Furthermore, when restricted to $\chi\Ff_p[\delta]$, $\phi$ is an isomorphism.
\end{prop}
\begin{proof}
	Since $\chi=\chi^*(x)$, $\gamma=-\chi^*(y_{p,0})$ and $\deg(y_{p,0})>3$, the desired formula for $\phi$ follows. Since $d_3=\nabla$ is the only possible differential with $E_{i}^{3,*}$ ($i\geq 2$) as the target and $K_p$ consists of permanent cycles by Theorem \ref{thm:E_4}, it follows that 
	\[K_p/(K_p\cap \im\nabla|_{\Lambda_p})\subset E_\infty^{3,*}.\]
	Thus, the following lemma shows that $\phi$ restricted to $\chi\Ff_p[\delta]$ is injective, hence an isomorphism because $\phi$ is surjective.
\end{proof}

\begin{lem}\label{lem:delta}
	In $\Lambda_p$,	$p\Zz[\delta]\subset\im\nabla$ and $\delta^k\notin\im\nabla$ for any integer $k\geq 0$.	
\end{lem}
\begin{proof}
	Since $p\delta^k=\nabla(\sigma_1\delta^k)$ by \eqref{eq:nabla}, $p\Zz[\delta]\subset\im\nabla$.
	 If $\nabla(f)=\delta^k$ for some $f\in\Lambda_p$, then $\sigma_1\delta^k-pf\in K_p$, and then Theorem \ref{thm:mod p BPU(p)} shows that 
	\[\sigma_1\delta^k\in K_p\otimes\Ff_p\subset L_p\otimes\Ff_p\subset H^*(BPU(p);\Ff_p).\]
	Since $\sigma_1\delta^k$ is the mod $p$ reduction of an integral class, $\beta(\sigma_1\delta^k)=0$. But this contradicts the fact that the image of $\beta(\sigma_1\delta^k)$ in the mod $p$ cohomology of the classifying space of a subgroup $\Gamma$ of $PU(p)$ is nonzero by Proposition \ref{prop:Mui} and \eqref{eq:f,h} below.
\end{proof}

 Note that $K_n\cap \im\nabla|_{\Lambda_n}$ is an ideal of $K_n$ since if $a\in K_n\cap \im\nabla|_{\Lambda_n}$, say $a=\nabla(f)$ for some $f\in\Lambda_n$, and $b\in K_n$, then $\nabla(fb)=ab$. So we have an algebraic application of Proposition \ref{prop:the map phi} and Lemma \ref{lem:delta}.
\begin{cor}\label{cor:delta}
	The quotient ring $K_p/(K_p\cap\im\nabla|_{\Lambda_p})$ is isomorphic to $\Ff_p[\delta]$.
\end{cor}
Let $I_p'\subset K_p$ be the kernel of the composition of $\Theta_p|_{K_p}$ with the mod $p$ reduction. In other words, $I_p=3\Zz\oplus I_p$. Here is another algebraic application.
\begin{cor}\label{cor:I_p'}
	$K_p\cap\im\nabla|_{\Lambda_p}=I_p'$.
\end{cor}
\begin{proof}
By \cite[Proposition 3.1]{Vis07} (see also \cite[Corollary 2.6]{Fan26b}), $K/I_p'\cong \Ff_p[\delta]$. Hence by Corollary \ref{cor:delta}, it suffices to show that 
$I_p'\subset \im\nabla|_{\Lambda_p}$. 
Given $f\in I'_p$, we can  write it as $f=f_0+f_1$, where $f_0\in\Zz[\delta]$, $f_1\in K_p\cap\im\nabla|_{\Lambda_p}$, by Corollary \ref{cor:delta}. Once we prove that $f_0\in p\Zz[\delta]$, it will follow that $f\in\im\nabla|_{\Lambda_p}$ since $p\Zz[\delta]\in\im\nabla|_{\Lambda_p}$ by Lemma \ref{lem:delta}.

Since $f\in K_p$, there is a class $\rho\in H^*(BPU(p))$ such that $(B\pi)^*(\rho)=f$ by Thoerem \ref{thm:E_4}, where $\pi:U(p)\to PU(p)$ is the quotient map. Thus $\rho$ is in the kernel of the composition of $(B\pi)^*$ and $\Theta_p$, which is the ideal $\langle I_p,\chi,\gamma\rangle$ in the formula of Theorem \ref{thm:vistoli}. Therefore, $\chi\rho\in\langle\chi\gamma\rangle$ since $\chi I_p=0$. It follows that $\chi\rho=0$ in $E_\infty^{3,*}$ since $\chi\gamma\in E_\infty^{2p+5,0}$. Note that the representitive of $\chi\rho$ in $E_\infty^{3,*}$ is $xf=xf_0$.
Then Proposition \ref{prop:the map phi} implies that $f_0\in p\Zz[\delta]$. 
\end{proof}

\section{On the ring $K_n$}\label{sec:K_n}
In this section, we present some purely algebraic results on the subring $K_n$ of $\Lambda_n$, which can be used to study the cohomology of $BPU(n)$. Recall from Theorem \ref{thm:E_4} that $K_n$ is the quotient ring of $H^*(BPU(n))$ by torsion elements.

For any integer $n>0$, define the ring homomorphism
\begin{equation}\label{eq:theta}
		\Theta:\Zz[n]\to\Zz, \ \ \Theta(v_i)=i.
\end{equation}

\begin{lem}\label{lem:Theta mod p}
	Let $p$ be a prime. Then for $\sigma_k\in\Lambda_p\subset\Zz[p]$, we have
	\[\Theta(\sigma_k)\equiv\begin{cases}
		-1\bmod p,&k=p-1,\\
		0\bmod p,&\text{otherwise.}
	\end{cases}\]  	
\end{lem}
\begin{proof}
	This comes from the fact that $x^{p-1}-1=\prod_{i=1}^{p-1}(x-i)$ in $\Ff_p[x]$.
\end{proof}
Suppose that $p|n$ for some prime $p$ and let $\Delta:\Zz[n]\to \Zz[p]$ be the ring homomorphism defined by $\Delta(v_i)=v_{r(i)}$, where $r(i)\in[p]$ and $i\equiv r(i)$ mod $p$. $\Delta$ clearly commutes with the differential operator $\nabla$.
Since 
\[\Delta(\prod_{i=1}^n(1+v_i))=(\prod_{i=1}^p(1+v_i))^{\frac{n}{p}},\]
the restriction of $\Delta$ to $\Lambda_n$ is given by
\begin{equation}\label{eq:diagonal map}
	\Delta(1+\sigma_1+\cdots+\sigma_n)=(1+\sigma_1+\cdots+\sigma_p)^{\frac{n}{p}}.
\end{equation}

\begin{lem}\label{lem:Theta for Lambda_p^r}
	Let $p$ be a prime. Then for $\sigma_k\in\Lambda_{p^r}$ ($r>1$), we have
	\[\Theta(\sigma_k)\equiv\begin{cases}
		-1\bmod p,&k=p^{r-1}(p-1),\\
		0\bmod p,&\text{otherwise.}
	\end{cases}\]
\end{lem}
\begin{proof}
	Since $\Theta\Delta\equiv \Theta$ mod $p$, the result follows from Lemma \ref{lem:Theta mod p} and \eqref{eq:diagonal map}.
\end{proof}

Recall that for any prime divisor $p$ of $n$, $K_{n,p}$ is the kernel of the map \[\nabla\otimes\Ff_p:\Lambda_n\otimes\Ff_p\to \Lambda_n\otimes\Ff_p.\]
\begin{lem}\label{lem:K_n,p}
	Suppose 
	that $f=\sum_il_i\mm_i$, $0\neq l_i\in\Ff_p$, is a nonzero element of $K_{n,p}$, where $\mm_i$ are monomials in $\Ff_p[\sigma_1,\dots,\sigma_n]$. If \[\mm_i=\sigma_{i_1p}^{j_1}\sigma_{i_2p}^{j_2}\cdots\sigma_{i_sp}^{j_s},\  0<i_1<\cdots<i_s,\ j_1,\dots,j_s>0,\] 
	for some $i$, then $p$ divides $j_k$ for $1\leq k\leq s$.
\end{lem}
\begin{proof}
	By \eqref{eq:nabla}, we have \[\nabla(\mm_i)=\sum_{k=1}^sj_k(n-i_kp+1)\sigma_{i_1p}^{j_1}\cdots\sigma_{i_kp-1}\sigma_{i_kp}^{j_k-1}\cdots\sigma_{i_sp}^{j_s}.\]
	Moreover, it is easy to see from \eqref{eq:nabla} that if $\mm\in\Lambda_n$ is a monic monomial, then a nonzero multiple of the monomial $\sigma_{i_1p}^{j_1}\cdots\sigma_{i_kp-1}\sigma_{i_kp}^{j_k-1}\cdots\sigma_{i_sp}^{j_s}$ appears in the mod $p$ reduction of $\nabla(\mm)$ only if $\mm=\mm_i$. This implies that $p|j_k(n-i_kp+1)$ since $f\in K_{n,p}$. However, $p\nmid(n-i_kp+1)$ since $p|n$, so $p|j_k$. 
\end{proof}

\begin{lem}\label{lem:K_p^r}
Let $p$ be a prime. Then for any $r>1$ and any $f\in K_{p^r,p}$, we have $\Delta(f)\in\Ff_p[\sigma_1^{p^r},\dots,\sigma_p^{p^r}]$ and the coefficient of a monomial $\prod_{k=1}^s\sigma_{i_k}^{j_kp^r}$ in $\Delta(f)$ is equal to the coefficient of $\prod_{k=1}^s\sigma_{i_kp^{r-1}}^{j_kp}$ in $f$.
\end{lem}
\begin{proof}
	This is a consequence of \eqref{eq:diagonal map} and Lemma \ref{lem:K_n,p}.
\end{proof}
We end this section by constructing a spectral sequence map $E_*^{*,*}(n)\to E_*^{*,*}(p)$ for any integer $n>0$ with a prime divisor $p$. 

Consider the diagonal map of matrices 
\[U(p)\to U(n),\quad A\mapsto \begin{bmatrix}
	A&0&\cdots&0\\
	0&A&\cdots&0\\
	\vdots&\vdots&\ddots&\vdots\\
	0&0&\cdots&A
\end{bmatrix},\] which passes to $PU(p)\to PU(n)$.
It induces maps 
\begin{equation}\label{eq:bar Delta}
	\Delta:BU(p)\to BU(n)\ \text{ and }\ \bar\Delta:BPU(p)\to BPU(n),
\end{equation} 
and a commutative diagram of fibrations
\begin{equation}\label{eq:diag}
\begin{gathered}\xymatrix{
	BU(p)\ar[r]\ar^{\Delta}[d]&BPU(p)\ar[r]\ar[d]^{\bar\Delta}&K(\Zz,3)\ar[d]^{=}\\
	BU(n)\ar[r]&BPU(n)\ar[r]&K(\Zz,3)}
\end{gathered}
\end{equation}
Then the sepectral sequence map $\bar\Delta^*:E_*^{*,*}(n)\to E_*^{*,*}(p)$ on $E_2$-pages is given by the induced homomorphism   $\Delta^*:H^*(BU(n))\to H^*(BU(p))$, which can be described as follows.

Let $c_i$ and $c_i'$ be the $i$th universal Chern classes of $BU(n)$ and $BU(p)$ respectively. 
Since the map $\Delta$ factors through the diagonal map $BU(p)\to (BU(p))^{\frac{n}{p}}$, the Whitney sum formula and the functorial property of total Chern classes give
\begin{equation}\label{eq:chern class}
	\Delta^*(1+c_1+\cdots+c_n)=(1+c_1'+\cdots+c_p')^{\frac{n}{p}}.
\end{equation}
Hence, $\Delta^*$ can be identified with the map $\Delta:\Lambda_n\to\Lambda_p$ defined above.

\section{An element $e\in K_9$}\label{sec:invariants}
The main goal of this section is to construct an element $e\in K_9$, which will be used to define the class $\alpha\in H^*(BPU(9))$. Throughout this section, $p$ is an odd prime and  $\Theta$ is the map defined in \eqref{eq:theta}. We assume that  the symmetric group $S_n$  acts naturally on $\Zz[n]$ by permuting the indices of $v_i$'s. The ring of invariants $(\Zz[n])^{S_n}$ is clearly the ring of symmetric polynomials $\Lambda_n$. 

We start with an elementary result in invariant theory.
\begin{lem}\label{lem:invariants}
	Let $G$ be a finite group, and let $R$ be a ring with a left $G$-action. Given $x\in R$, let $G_x=\{g\in G: gx=x\}$ be the isotropy group of $x$ and suppose that $\{g_iG_x:1\leq i\leq k\}$ is the set of cosets of $G_x$ in $G$. Then 
	$r:=\sum_{i=1}^k g_ix\in R^G$.
\end{lem}
\begin{proof}
	Set $s=\sum_{g\in G}gx$. It is clear that $s\in R^G$. Since 
	\[s=\sum_{i=1}^k\sum_{g\in g_iG_x} gx=|G_x|\sum_{i=1}^kg_ix,\]
	we have $r=s/|G_x|\in R^G$.
\end{proof}

Before giving $e\in K_9$, we construct an element $e_0\in K_{p^2}$ for any odd prime $p$. 
\subsection{The element $e_0$}
First, define
\[w=\prod_{p|i,\,p\nmid j}(v_i-v_j)\in \Zz[p^2].\]
$w$ has degree $p^2(p-1)$, and it is easy to check that the isotropy group $G_w$ of $w$ is $S_p\times S_{p^2-p}\subset S_{p^2}$, where $S_p$ and $S_{p^2-p}$ are symmetric groups acting on the sets $\{i\in [p^2]: p|i\}$ and $\{j\in [p^2]: p\nmid j\}$ respectively. Let $C_p\subset S_{p^2}$ be the cyclic group generated by the element 
\[\prod_{i=0}^{p-1}(ip+1,ip+2,\dots,ip+p)\in S_{p^2}.\] 
Set $H=C_pG_w$. Since $C_p\cap G_w=1$, there exist elements $g_1,\dots,g_s\in S_{p^2}\setminus H$ such that the set of cosets of $G_w$ in $S_{p^2}$ has the form
\[\{gG_w:g\in C_p\}\cup\{g_1G_w,\dots,g_sG_w\}.\]
Write $C_p=\{g_{s+1},\dots,g_{s+p}\}$. Since $w\in \ker\nabla$,  Lemma \ref{lem:invariants} shows that 
\[e_0:=\sum_{i=1}^{s+p}g_iw\in \ker\nabla\cap\Lambda_{p^2}=K_{p^2}.\]

\begin{lem}\label{lem:gamma mod p}
	The element $e_0\in K_{p^2}$ defined above satisfies that 
	\[\Theta(e_0)\equiv -p \bmod p^2.\]
	Moreover, for $p=3$, we have
	\[\Delta(e_0)\equiv\sigma_1^{18}\bmod 3.\]
	\begin{proof}
		Since $\prod_{0<i<p}i\equiv -1$ mod $p$, for $i,j\in [p^2]$ we have
		\[\prod_{p|i,\,p\nmid j}(i-j)\equiv(\prod_{0<i<p}i)^{p^2}\equiv -1\bmod p.\]
		Similarly, we have $g(\prod_{p|i,\,p\nmid j}(i-j))\equiv -1$ mod $p$ for any permutation $g\in C_p$ defined above. On the other hand, it is easy to check that $g(\prod_{p|i,\,p\nmid j}(i-j))\equiv 0$ mod $p^2$ for any 
		$g\in S_{p^2}\setminus H$. Then, the desired formula for $\Theta(e_0)$ follows from the definitions of $e_0$ and $\Theta$.
		
		 We observe that $\Delta(gw)=0$ for any $g\in S_{p^2}\setminus H$, hence
		\[\Delta(e_0)=\sum_{g\in C_p}\Delta(gw)=\sum_{i=1}^p\prod_{j=1,j\neq i}^p(v_i-v_j)^{p^2}\in \Lambda_p.\]
		The coefficients of $v_i^{p^2(p-1)}$, $1\leq i\leq p$, on the right are $1$, which implies that 
		\[\Delta(e_0)=\sigma_1^{p^2(p-1)}+\text{other monomials}.\]
		In particular, for $p=3$, since $e_0\in K_9$ has degree $18$, Lemma \ref{lem:K_p^r} shows that $\Delta(e_0)\equiv \sigma_1^{18}+k\sigma_2^9$ mod $3$, where $k$ is the coefficient of $\sigma_6^3$ in $e_0$ mod $3$. We need to show that $k\equiv 0$ mod $3$, and this is immediate from Lemma \ref{lem:Theta for Lambda_p^r} and the first formula $\Theta(e_0)\equiv -3$ mod $9$. 
	\end{proof}
\end{lem}

\subsection{The element $e$}\label{subsec:the element e}
To define the element $e$, we need the following algebraic fact, which can be verified directly using \eqref{eq:nabla}.
\begin{lem}\label{lem:p=3}
	If $9|n$, then in $K_n$, there exists an element of the form
	\[\begin{split}
		a_1\sigma_9+a_2\sigma_8\sigma_1&+a_3\sigma_7\sigma_2+a_4\sigma_6\sigma_3+a_5\sigma_5\sigma_4\\
		&+a_6\sigma_5\sigma_3\sigma_1+a_7\sigma_5\sigma_2^2+a_8\sigma_4^2\sigma_1+a_9\sigma_4\sigma_3\sigma_2+a_{10}\sigma_3^3,
	\end{split}\]
	where $a_i\in\Zz$ and $a_{10}\equiv 1$ mod $3$.
\end{lem}
For example, if $n=9$, we can take the integral vector $(a_1,\dots,a_{10})$ to be
\[(-37044, 4116, -1029, 441, -126, -112, 49, 70, -35, 10).\]
Let $q\in K_9$ be the symmetric polynomial defined by the above vector, and set $e=q^2-e_0$, where $e_0\in K_{p^2}$ for $p=3$ have been constructed in the previous subsection.
\begin{prop}\label{prop:e}
	The element $e\in K_9$ defined above satisfies that 
	\[\Delta(e)\equiv 0\bmod 3\ \text{ and }\ \Theta(e)\equiv 3\bmod 9.\]
\end{prop}
\begin{proof}
	Since $q\in K_9$ has degree $9$ and $a_{10}\equiv 1$ mod $3$, Lemma \ref{lem:K_p^r} shows that $\Delta(q)\equiv\sigma_1^9$ mod $3$. Then, by the definition of $e$ and Lemma \ref{lem:gamma mod p}, $\Delta(e)\equiv 0$ mod $3$. Since $\Theta\equiv\Theta\Delta$ mod $p$, it follows from  Lemma \ref{lem:Theta mod p} that 
	\[\Theta(q)\equiv \Theta\Delta(q)\equiv 0 \bmod 3.\]
	Hence, $\Theta(q^2)\equiv 0$ mod $9$, and then $\Theta(e)\equiv 3\bmod 9$ by Lemma \ref{lem:gamma mod p}.
\end{proof}
We have two corollaries of Proposition \ref{prop:e}.

\begin{cor}\label{cor:e}
	$e=3k\sigma_6^3+\text{other monomials}$  for some integer $k$ with $3\nmid k$. 
\end{cor}
\begin{proof}
	By Lemma \ref{lem:Theta for Lambda_p^r}, $\Theta(\sigma_6)\equiv -1$ mod $3$ and any monomial $\mm\in\Lambda_9$ of degree $18$ other than a multiple of $\sigma_6^3$ satisfies $\Theta(\mm)\equiv 0$ mod $9$. Thus the formula for $\Theta(e)$ from Proposition \ref{prop:e} gives the result.
\end{proof}

\begin{cor}\label{cor:Delta(e)}
	$\Delta(e)=3(k\delta^3+\nabla(a))$ for some integer $k$ with $3\nmid k$ and some $a\in\Lambda_3$, where $\delta\in K_3$ is defined in Section \ref{sec:BPU(p)}.
\end{cor}
\begin{proof}
	By Proposition \ref{prop:e}, $\Delta(e)\in 3K_3$. Then Corollary \ref{cor:delta} shows that $\Delta(e)=3(k\delta^3+\nabla(a))$ for some integer $k$ and some $a\in \Lambda_3$, so we need only verify  $3\nmid k$. Note that $\Theta(\delta)\equiv -1$ mod $3$ by \eqref{eq:Theta(delta)}, and that $\nabla(a)\in K_3$ since $\Delta(e),\delta\in K_3$. Hence, we have $\Theta\Delta(e)\equiv -3k$ mod $9$ by Corollary \ref{cor:I_p'}. On the other hand,  it follows from Corollary \ref{cor:e} that
	\[\Delta(e)\equiv 3k'\sigma_2^9+\text{other monomials} \bmod 9,\text{ for some }k'\text{ with }3\nmid k',\]
	since by \eqref{eq:diagonal map} $\Delta(\sigma_6)\equiv\sigma_2^3$ mod $3$ and if $\mm\in\Lambda_9$ is a monomial of degree $18$ other than a multiple of $\sigma_6^3$, any nonzero multiple of $\sigma_2^9$ does not appear in $\Delta(\mm)$ mod $3$.
	Then we have $\Theta\Delta(e)\equiv-3k'$ mod $9$ by Lemma \ref{lem:Theta mod p} and therefore $3\nmid k$. 
\end{proof}

\begin{prop}\label{prop:e not in}
	In $\Lambda_9$, $e\notin\im\nabla$.
\end{prop}
\begin{proof}
	By Corollary \ref{cor:Delta(e)}, we can write $\Delta(e)=3(k\delta^3+\nabla(a))$ for some integer $k$ with $3\nmid k$ and some $a\in\Lambda_3$. 
Suppose that $e=\nabla(f)$ for some $f\in\Lambda_9$ and write $h=\Delta(f)$. Since $f$ has degree $19$, \eqref{eq:diagonal map} shows that $h\in 3\Lambda_3$, say, $h=3h'$ for some $h'\in \Lambda_3$. Since $\nabla$ commuts with $\Delta$, it follows that 
\[\nabla(3h')=\nabla(h)=\nabla\Delta(f)=\Delta\nabla(f)=\Delta(e)=3(k\delta^3+\nabla(a)).\]
Thus $\nabla(h')=k\delta^3+\nabla(a)$, and then $k\delta^3\in\im\nabla|_{\Lambda_3}$. But this contradicts Lemma \ref{lem:delta} since $3\nmid k$.
\end{proof}

\section{Elementary abelian $p$-subgroups of $PU(n)$}\label{sec:p-subgroups}
For any prime $p$, the nontoral elementary abelian $p$-subgroups of $PU(n)$ with $p|n$ are studied in \cite{AGMV08} and \cite{Griess91}. Recall that a subgroup of a compact Lie group $G$ is called \emph{toral} if it is contained in a torus in $G$ and \emph{nontoral} otherwise. For the rest of this section, we assume that $p$ is an odd prime.
\subsection{The group $PU(p)$}\label{subsec:PUp}
First we consider the case $n=p$.
Let $\omega=e^{2\pi i/p}$, and let $\tilde\sigma,\tilde\tau\in U(p)$ be
\[\tilde\sigma=\begin{pmatrix}
	0& 1\\
	I_{p-1} &0
\end{pmatrix},\quad
\tilde\tau=(\omega,\omega^2,\dots,\omega^{p-1},1)\in T^p.\]
Let $\tilde\Gamma$ be the subgroup of $U(p)$ generated by $\tilde\sigma$ and $\tilde\tau$, and let $\sigma$, $\tau$, $\Gamma$ denote the corresponding images in the quotient group $PU(p)$. We have $\tilde\tau\tilde\sigma=\omega\tilde\sigma\tilde\tau$, so $\sigma$ and $\tau$ commute in $PU(p)$, i.e. $\Gamma\cong (\Zz/p)^2$. 
Hence 
\[H^*(B\Gamma;\Ff_p)\cong \Ff_p[\xi,\eta]\otimes\Lambda_{\Ff_p}[a,b],\] 
where $a,b\in H^1(B\Gamma;\Ff_p)\cong \Hom(\Gamma,\Zz/p)$ are defined by $a(\sigma)=b(\tau)=1$, $a(\tau)=b(\sigma)=0$, and $\xi=\beta(a)$, $\eta=\beta(b)$. 
By a result of Andersen et al. (see Theorem \ref{thm:Weyl group} below), we have $W_{PU(p)}(\Gamma)=SL_2(\Ff_p)$.

Let $y=ab\in H^*(B\Gamma;\Ff_p)$.
Since $SL_2(\Ff_p)$ acts natually on $\Ff_p\{a,b\}$, it is easy to check that $y\in H^*(B\Gamma;\Ff_p)^{SL_2(\Ff_p)}$. Since the Bockstein homomorphism and Steenrod powers are $SL_2(\Ff_p)$-equivariant, the following elements
\[
s=\beta(y)=\xi b-\eta a,\ z=P^1(s)=\xi^pb-\eta^pa,\ f=\beta(z)=\xi^p\eta-\eta^p\xi
\]
lie in $H^*(B\Gamma;\Ff_p)^{SL_2(\Ff_p)}$. Moreover, let 
\[h=\xi^{p^2-p}+\eta^{p-1}(\xi^{p-1}-\eta^{p-1})^{p-1}.\]
Then by the result of Dickson  \cite{Dick11} for the invariants of the action of $SL_n(\Ff_p)$ on $\Ff_p[n]$ (here we use it for the special case $n=2$), we have
\begin{equation}\label{eq:Dickson}
\Ff_p[\xi,\eta]^{SL_2(\Ff_p)}=\Ff_p[f,h]\subset H^*(B\Gamma;\Ff_p)^{SL_2(\Ff_p)}.
\end{equation}

The following proposition is the special case $n=2$ of M\`ui's theorem \cite{Mui75} (see also \cite[Theorem 1.1]{KM07}) for the ring of $SL_n(\Ff_p)$-invariants of the tensor product algebra $\Ff_p[n]\otimes\Lambda_{\Ff_p}[n]$. 
\begin{prop}\label{prop:Mui}
	$H^*(B\Gamma;\Ff_p)^{SL_2(\Ff_p)}$ is the subring of $H^*(B\Gamma;\Ff_p)$ generated by $f,h,s,y,z$, which is isomorphic to the following quotient ring
	\[R=\frac{\Ff_p[f,h]\otimes\Lambda_{\Ff_p}[s,y,z]}{\langle ys,\,yz,\,fy+sz\rangle}.\]
\end{prop}
\begin{cor}[{\cite[Proposition 5.10]{Vis07}}]\label{cor:cohomology of Gamma}
	\[H^*(B\Gamma)^{SL_2(\Ff_p)}=\Zz[f,h,s]/\langle pf,ph,ps,s^2\rangle.\]
\end{cor}
Let $i:\Gamma\to PU(p)$ be the inclusion and consider the restriction map of mod $p$
cohomology
\[(Bi)^*:H^*(BPU(p);\Ff_p)\to H^*(B\Gamma;\Ff_p).\]
The paper \cite[(3.1), (3.2)]{Fan26b} shows that
\begin{gather}\label{eq:f,h}
	\begin{gathered}
	(Bi)^*(\chi)=s,\ \ (Bi)^*(\gamma)=f,\ \ (Bi)^*(\delta)=-h,\\ 
	(Bi)^*(\sigma_1)=y,\ \ (Bi)^*(\zeta)=z
	\end{gathered}
\end{gather}
where $\chi$, $\gamma$, $\delta$, $\sigma_1$, $\zeta$ are defined in Theorem \ref{thm:mod p BPU(p)}.
\subsection{The group $PU(n)$}\label{subsec:p-subgroups}
 For any integer $m>0$, a canonical nontoral elementary abelian $p$-subgroup of $PU(p^m)$ is constructed in \cite{Oliver94} (see also \cite[Section 3]{Gu26}). To describe this elementary abelian $p$-subgroup, we use the $m$-fold tensor product homomorphism:
\[t:{U(p)\times \cdots \times U(p)}\to U(p^m),\ \ t(g_1,\dots,g_m)=g_1\otimes\cdots\otimes g_m.\]
Let $\tilde\sigma,\tilde\tau\in U(p)$ and $\tilde\Gamma\subset U(p)$ be as before, and let $\tilde N_m$ be the subgroup of $U(p^m)$ generated by elements $g_1\otimes\cdots\otimes g_m$, $g_i\in\tilde\Gamma$. The center of $\tilde N_m$ is the subgroup $\Zz/p$ of $U(p^m)$ generated by the scalar matrix $\omega I_{p^m}$. So  $\tilde N_m$ fits into a short exact sequence:
\[0\to\Zz/p\to\tilde N_m\to N_m\to 0,\ \ N_m\subset PU(p^m).\]

Define
$\tilde\sigma_i=1\otimes\cdots\otimes1\otimes\tilde\sigma\otimes1\otimes\cdots\otimes1$,
where $\tilde\sigma$ occupies the $i$th entry. $\tilde\tau_i$ is similarly defined with $\tilde\sigma$ replaced by $\tilde\tau$. Let $\sigma_i$, $\tau_i$ be the images of $\tilde\sigma_i$, $\tilde\tau_i$ in $PU(p^m)$, and let $\Gamma_i$ be the subgroup of $PU(p^m)$ generated by $\sigma_i,\tau_i$. 
\begin{prop}[{\cite[pp. 56-57]{Oliver94}}]
	$N_m$ is a nontoral elementary abelian $p$-subgroup of $PU(p^m)$ isomorphic to
	\[\Gamma_1\times\cdots\times\Gamma_m\cong (\Zz/p)^{2m}.\]
\end{prop}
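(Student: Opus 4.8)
The plan is to verify three things in turn: that $N_r$ is elementary abelian, that it has rank exactly $2r$ with the asserted product decomposition, and that it is nontoral.

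First I would check that $N_r$ is elementary abelian of exponent $p$. Since $\tilde\sigma_i$ and $\tilde\sigma_j$ (and likewise $\tilde\tau_i,\tilde\tau_j$, or $\tilde\sigma_i,\tilde\tau_j$ with $i\neq j$) act on distinct tensor slots, they commute in $U(p^r)$; within a single slot the relation $\tilde\sigma\tilde\tau=\omega\tilde\tau\tilde\sigma$ gives $\tilde\sigma_i\tilde\tau_i=(\omega I_{p^r})\,\tilde\tau_i\tilde\sigma_i$. As $\omega I_{p^r}$ is central in $U(p^r)$, all the images $\sigma_i,\tau_i$ commute in $PU(p^r)$, so $N_r$ is abelian. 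Moreover $\tilde\sigma^p=\tilde\tau^p=I_p$ forces $\tilde\sigma_i^p=\tilde\tau_i^p=I_{p^r}$, hence each generator has order dividing $p$ and $N_r$ is elementary abelian.

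Next I would pin down the rank. The $\Gamma_i$ pairwise commute and generate $N_r$, so multiplication gives a surjection $\Gamma_1\times\cdots\times\Gamma_r\to N_r$ whose source has order at most $p^{2r}$; it therefore suffices to show the $2r$ elements $\sigma_1,\tau_1,\dots,\sigma_r,\tau_r$ are $\Ff_p$-linearly independent, equivalently that $\prod_i\tilde\sigma_i^{a_i}\tilde\tau_i^{b_i}$ is scalar only when every $(a_i,b_i)\equiv(0,0)$, for then $|N_r|=p^{2r}$ and the surjection is forced to be an isomorphism (with each $\Gamma_i\cong\Ff_p^2$). Because factors in distinct slots commute, this product equals the tensor product $\bigotimes_{i=1}^r(\tilde\sigma^{a_i}\tilde\tau^{b_i})$. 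The key lemma is that a tensor product of invertible matrices of finite order (hence diagonalizable) is scalar only when each factor is scalar; this follows by comparing eigenvalues, since the eigenvalues of $\bigotimes_i M_i$ are exactly the products of eigenvalues of the $M_i$. Finally, a nontrivial $\tilde\sigma^a\tilde\tau^b$ is never scalar: for $b\not\equiv0$ it is a monomial matrix realizing a nontrivial cyclic permutation, hence has zero diagonal, while for $b\equiv0$, $a\not\equiv0$ the diagonal matrix $\tilde\sigma^a$ has the distinct eigenvalues $\omega^{ja}$. Combining these facts, each tensor factor being scalar forces $(a_i,b_i)\equiv(0,0)$, giving $N_r\cong\Gamma_1\times\cdots\times\Gamma_r\cong\Ff_p^{2r}$.

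For nontorality I would invoke the criterion that a subgroup $A\subseteq PU(p^r)$ is toral if and only if its full preimage in $U(p^r)$ is abelian. One direction is clear since every maximal torus of $PU(p^r)$ is the image of a maximal torus of $U(p^r)$; the converse uses that a commuting family of unitary matrices is simultaneously diagonalizable and therefore lies in a common maximal torus of $U(p^r)$. The preimage of $N_r$ contains $\tilde N_r$, hence the pair $\tilde\sigma_1,\tilde\tau_1$ with $\tilde\sigma_1\tilde\tau_1=\omega\tilde\tau_1\tilde\sigma_1\neq\tilde\tau_1\tilde\sigma_1$; thus the preimage is nonabelian and $N_r$ is nontoral. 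The main obstacle is the rank computation: the tensor-product scalar lemma together with the non-scalarity of $\tilde\sigma^a\tilde\tau^b$ carries the real content, whereas the elementary-abelian and nontorality assertions are short once the commutation relations and the torality criterion are in hand.
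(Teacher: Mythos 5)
Your argument is correct and complete; note that the paper itself gives no proof of this proposition, quoting it from Oliver's work, so there is nothing internal to compare against. All three of your steps check out: the commutation relation $\tilde\sigma_i\tilde\tau_i=(\omega I_{p^r})\tilde\tau_i\tilde\sigma_i$ together with $\tilde\sigma^p=\tilde\tau^p=I_p$ gives the elementary abelian structure; the tensor--scalar lemma (a tensor product of diagonalizable invertible matrices is scalar only if each factor is) combined with the non-scalarity of $\tilde\sigma^a\tilde\tau^b$ for $(a,b)\not\equiv(0,0)$ pins down the order $p^{2r}$; and the standard torality criterion (toral in $PU(n)$ iff the preimage in $U(n)$ is abelian, via simultaneous diagonalization of commuting unitaries) gives nontorality from $\tilde\sigma_1\tilde\tau_1\neq\tilde\tau_1\tilde\sigma_1$. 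One remark: the rank computation, which you rightly identify as carrying the real content, can be shortcut using the commutator pairing that the paper introduces immediately afterwards as the symplectic form $\Omega_r$. If $g=\prod_i\tilde\sigma_i^{a_i}\tilde\tau_i^{b_i}$ were scalar it would be central in $U(p^r)$, so its commutator with each $\tilde\sigma_j$ and $\tilde\tau_j$ would be trivial; but these commutators are $\omega^{\pm b_j}$ and $\omega^{\pm a_j}$ respectively, forcing all $a_j\equiv b_j\equiv 0\pmod p$. This replaces the eigenvalue analysis of tensor products by the nondegeneracy of $\Omega_r$ in the basis $\{\sigma_i,\tau_i\}$, and it simultaneously re-proves nontorality (the pairing is nonzero, so the preimage is nonabelian). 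Your eigenvalue route is more self-contained; the symplectic route ties the proposition to the structure the paper actually uses in Theorem 6.4.
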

Thinking of $N_m$ as a vector space over $\Ff_p$, there is a canonical symplectic form
\[\Omega_m:N_m\times N_m\cong \Ff_p^{2m}\times\Ff_p^{2m}\to \Ff_p\]
defined as follows. If $g,g'\in N_m\subset PU(p^m)$, lift them to matrices $\tilde g,\tilde g'$ in $U(p^m)$. The commutator $\tilde g\tilde g'\tilde g^{-1}\tilde g'^{-1}$ is a scalar multiple of the identity matrix $I_{p^m}$ with the scalar factor in $\Ff_p=\{\omega^i:0\leq i\leq p-1\}$, say $\omega^i$. Then write $\Omega_m(g,g')=i$. This $2$-form is independent of the liftings and it is a  symplectic form with a standard symplectic basis $\{\sigma_1,\dots,\sigma_m,\tau_1,\dots,\tau_m\}$ such that
\[\Omega_m(\tau_i,\sigma_j)=\delta_{ij},\ \ \Omega_m(\sigma_i,\sigma_j)=\Omega_m(\tau_i,\tau_j)=0.\] 

More generally, suppose that $n>0$ is an integer with $p$-primary factor $p^m$, $m\geq 1$. For each $1\leq i\leq m$, let $N_i\subset PU(p^i)$ be defined as above. 
Let $n_i=n/p^i$, and let $A_i$ be a maximal toral elementary abelian $p$-subgroup of $PU(n_i)$. The tensor product map $T:U(p^i)\times U(n_i)\to U(n)$, $T(g,g')=g\otimes g'$, induces a homomorphism:
\[T:PU(p^i)\times PU(n_i)\to PU(n).\]
It turns out that $E_i:=T(N_i\times A_i)\cong N_i\times A_i$ is a nontoral elementary abelian $p$-subgroup of $PU(n)$.

\begin{thm}[{\cite[Theorem 8.5]{AGMV08}}]\label{thm:Weyl group}
Suppose that $n>0$ is an integer with $p$-primary factor $p^m$, $m\geq 1$. Then there are exactly $m$ conjugacy classes $C_1,\dots,C_m$ of maximal nontoral elementary abelian $p$-subgroups of $PU(n)$. For each $1\leq i\leq m$, the group $E_i$ defined above gives a representative of $C_i$.

The Weyl groups are
\[W_{PU(n)}(E_i)=\begin{pmatrix}
		Sp_{2i}(\Ff_p)&0\\
		*&W_{PU(n_i)}(A_i)
	 \end{pmatrix}.\]
Here $Sp_{2i}(\Ff_p)$ is the symplectic group associated to the symplectic form $\Omega_i$ on $N_i$, and the symbol $*$ denote an $(n_i-1)\times 2i$ matrix with arbitrary entries.
\end{thm}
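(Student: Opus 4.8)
The plan is to recover the classification from the \emph{commutator pairing} attached to each elementary abelian $p$-subgroup. Given an elementary abelian $E\subset PU(n)$, let $\tilde E\subset U(n)$ be the $p$-group generated by lifts of generators of $E$; it sits in a central extension $1\to\mu_p\to\tilde E\to E\to 1$, and the commutator $\tilde g\tilde g'\tilde g^{-1}\tilde g'^{-1}\in\mu_p\cong\Ff_p$ defines an alternating form $\Omega_E\colon E\times E\to\Ff_p$, exactly as in the construction of $\Omega_r$ above. The first step is to note that $\Omega_E$ is a conjugacy invariant and that $E$ is toral iff $\Omega_E=0$: if $\Omega_E=0$ then $\tilde E$ is a finite abelian subgroup of $U(n)$, hence simultaneously diagonalizable and contained in a maximal torus, so $E$ is toral; conversely a toral $E$ lifts into a maximal torus of $U(n)$, forcing commutators to vanish. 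Consequently the radical $E^{\perp}$ is the maximal toral subgroup of $E$, and $E$ is nontoral precisely when $\Omega_E\neq0$.

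Next I would decompose $E\cong N\perp A$, with $A=E^{\perp}$ the radical and $\Omega_E$ restricting to a nondegenerate symplectic form on $N\cong\Ff_p^{2i}$. The engine of the classification is the representation theory of the extraspecial $p$-group $\tilde N$ of order $p^{2i+1}$: by Stone--von Neumann it has a unique faithful irreducible complex representation, of dimension $p^i$, realized by the tensor construction $\tilde N_i\subset U(p^i)$. Restricting the $\tilde E$-action on $\Cc^n$ to $\tilde N$ then forces $\Cc^n\cong\Cc^{p^i}\otimes\Cc^{n_i}$ with $\tilde N_i$ acting on the first factor and the radical $A$ acting on the multiplicity space $\Cc^{n_i}$ through $PU(n_i)$, $n_i=n/p^i$; in particular $p^i\mid n$, so $1\le i\le r$. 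Maximality forces $A$ to be a maximal toral elementary abelian subgroup $A_i\cong\Ff_p^{\,n_i-1}$ of $PU(n_i)$, recovering the groups $E_i=T(N_i\times A_i)$ defined above. For the count of classes, conjugate subgroups carry isometric forms, so $i$ is an invariant; conversely uniqueness of the extraspecial irreducible pins down the embedding of $N_i$ up to conjugacy, and all maximal toral elementary abelian subgroups of $PU(n_i)$ are conjugate, yielding exactly the $r$ classes $C_1,\dots,C_r$.

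For the Weyl group I would first establish the block shape as an \emph{upper bound}. Since conjugation preserves $\Omega_{E_i}$ and hence its radical $A_i$, every class in $W_{PU(n)}(E_i)$ preserves the flag $A_i\subset E_i$; with respect to it the action is block-lower-triangular, the induced action on $E_i/A_i\cong N_i$ is an isometry of $\Omega_i$ and so lies in $Sp_{2i}(\Ff_p)$, and the action on $A_i$ must normalize $A_i$ inside $PU(n_i)$, hence lies in $W_{PU(n_i)}(A_i)$. Crucially, the off-diagonal block $N_i\to A_i$ is \emph{unconstrained} by the form, because $\Omega_i$ vanishes on $A_i$; this is the abstract source of the arbitrary $(n_i-1)\times 2i$ matrix, using $A_i\cong\Ff_p^{\,n_i-1}$.

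The heart of the proof --- and the main obstacle --- is the reverse inclusion, realizing every allowed matrix by an honest conjugation. I would obtain $Sp_{2i}(\Ff_p)$ from the fact that every automorphism of $\tilde N_i$ fixing its center is symplectic and, by uniqueness of the $p^i$-dimensional irreducible, is implemented by an intertwining unitary, giving a lift into $N_{U(p^i)}(\tilde N_i)$ and hence into $N_{PU(n)}(E_i)$; the diagonal block $W_{PU(n_i)}(A_i)$ comes from the normalizer of $A_i$ inside the tensor factor $PU(n_i)$; and the arbitrary off-diagonal block is produced by block-diagonal elements that act on each of the $n_i$ eigenspaces of $A_i$ on $\Cc^{n_i}$ by a chosen lift of an element of $N_i$. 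Such an element centralizes $A_i$, and its commutator with a generator $\sigma_k$ or $\tau_k$ is the diagonal $A_i$-factor whose entries are values of $\Omega_i$, so nondegeneracy of $\Omega_i$ lets one hit every homomorphism $N_i\to A_i$. Verifying that these three families together surject onto the full block group, and that no hidden relation shrinks the off-diagonal part, is the delicate bookkeeping carried out in the cited work of Andersen--Grodal--M{\o}ller--Viruel.
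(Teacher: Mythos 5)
The paper offers no proof of this statement: it is imported verbatim as a citation of \cite[Theorem 8.5]{AGMV08}, so there is no in-paper argument to compare against. Your sketch follows the same strategy as that source (and Griess): the commutator form $\Omega_E$ as the conjugacy invariant with ``toral $\Leftrightarrow$ $\Omega_E=0$'', the Stone--von Neumann uniqueness of the faithful irreducible of the extraspecial lift $\tilde N$ forcing the tensor decomposition $\Cc^n\cong\Cc^{p^i}\otimes\Cc^{n_i}$, and the three families of normalizing elements realizing the symplectic block, the $W_{PU(n_i)}(A_i)$ block, and the arbitrary off-diagonal block (your elements $\sum_j \tilde w_j\otimes Q_j$ supported on the $A_i$-eigenspaces do hit every homomorphism $N_i\to A_i$ by nondegeneracy of $\Omega_i$). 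The outline is correct and matches the cited proof; the only parts you defer --- maximality of each $E_i$ and the verification that a normalizer element induces on $A_i$ an automorphism actually realized inside $PU(n_i)$ --- are precisely the bookkeeping done in \cite{AGMV08}.
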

The mod $p$ cohomology of $BE_i\simeq BN_i\times BA_i$ is given by
\[\begin{split}
	H^*(BE_i;\Ff_p)&\cong H^*(BN_i;\Ff_p)\otimes H^*(BA_i;\Ff_p)\\
	&\cong\big(\Ff_p[\xi_1,\eta_1,\dots,\xi_i,\eta_i]\otimes\Lambda_{\Ff_p}[a_1,b_1,\dots,a_i,b_i]\big)\\
	&\otimes\big(\Ff_p[v_1,\dots,v_{n_i-1}]\otimes\Lambda_{\Ff_p}[t_1,\dots,t_{n_i-1}]\big),
\end{split}\]
where $a_j,b_j\in H^1(BN_i;\Ff_p)\cong \Hom(N_i,\Zz/p)$ are defined by \[a_j(\sigma_k)=b_j(\tau_k)=\delta_{jk},\ \ a_j(\tau_k)=b_j(\sigma_k)=0,\] $\xi_j=\beta(a_j)$, $\eta_j=\beta(b_j)$, and $\{t_j\}_{j=1}^{n_i-1}$ is a basis of $H^1(BA_i;\Ff_p)$, $v_j=\beta(t_j)$. 

Let $\Phi_i$ be the restriction map 
\[\Phi_i:H^*(BPU(n);\Ff_p)\to H^*(BE_i;\Ff_p),\]
and let $W_i=W_{PU(n)}(E_i)$.
The image of $\Phi_i$ is contained in $H^*(BE_i;\Ff_p)^{W_i}$, and the action of $W_i$ on $H^*(BE_i;\Ff_p)$ is induced by the linear transformations on the two generator spaces 
\begin{gather*}
	\Ff_p\{\xi_1,\eta_1,\dots,\xi_i,\eta_i,v_1,\dots,v_{n_i-1}\},\\ \Ff_p\{a_1,b_1,\dots,a_i,b_i,t_1,\dots,t_{n_i-1}\}
\end{gather*}	
given by the matrix in Theorem \ref{thm:Weyl group}.

\begin{lem}\label{lem:image of chi}
	$\Phi_i(\chi)=\sum_{j=1}^is_i$, where $s_j=\xi_jb_j-\eta_ja_j$.
\end{lem}
\begin{proof}
	Let $F_i=SL_2(\Ff_p)^{\times i}\subset Sp_{2i}(\Ff_p)$ be the $i$-fold direct product of $SL_2(\Ff_p)$.
	$F_i$ acts on the generator spaces $\prod_{j=1}^i\Ff_p\{\xi_j,\eta_j\}$ and $\prod_{j=1}^i\Ff_p\{a_j,b_j\}$ in the natural way such that the $k$th factor $SL_2(\Ff_p)$ acts on $\Ff_p\{\xi_k,\eta_k\}$, $\Ff_p\{a_k,b_k\}$.
	
	Let $S_i\subset Sp_{2i}(\Ff_p)$ be the symmetric group that permutes the $i$ factors $\Ff_p^2$ of $\Ff_p^{2i}\cong (\Ff_p^2)^{\times i}$. Namely, $g(\sigma_j)=\sigma_{g(j)}$, $g(\tau_j)=\tau_{g(j)}$ for any $g\in S_i$. Then $F_i\times S_i\subset Sp_{2i}(\Ff_p)\subset W_i$, so we have
	\begin{equation}\label{eq:invariant}
		H^*(BE_i;\Ff_p)^{W_i}\subset H^*(BE_i;\Ff_p)^{F_i\times S_i}\cong (R^{\otimes i})^{S_i}\otimes H^*(BA_i;\Ff_p),
	\end{equation}
	where $R$ is the ring defined in Proposition \ref{prop:Mui}. 
	
	Let $\mathbf{s}=\sum_{j=1}^is_j$, $\yy=\sum_{j=1}^ia_jb_j$. Recall that $\deg(\chi)=3$, and that $A_i$ is toral. Hence, by \eqref{eq:invariant} and Proposition \ref{prop:Mui}, $\Phi_i(\chi)$ has the form
	\[\Phi_i(\chi)=l\mathbf{s}+\yy\otimes\mathbf{t},\ l\in\Ff_p,\ \mathbf{t}\in H^1(BA_i;\Ff_p).\] 
	Since $\beta(\chi)=\beta(\mathbf{s})=0$ and $\beta(\yy)=\mathbf{s}$, we have $\mathbf{t}=0$. It remains  to show that $l=1$. This is ture for $n=p$ by \eqref{eq:f,h}.
	For the general case, we use the fact that $\bar\Delta^*(\chi)=\chi$ for the  map $\bar\Delta:BPU(p)\to BPU(n)$ defined in \eqref{eq:bar Delta} and notice that $\bar\Delta^*(\mathbf{s})=s$ for the restriction $\bar\Delta:B\Gamma\to BE_i$. 
\end{proof}

When $n=p^m$, Theorem \ref{thm:Weyl group} shows that $W_m=Sp_{2m}(\Ff_p)$. In this case, let $P$ be the polynomial subring $\Ff_p[\xi_1,\eta_1,\dots,\xi_m,\eta_m]$ of $H^*(BE_m;\Ff_p)$.
Gu \cite{Gu26} proves that
\[\im\Phi_m\cap P=P^{W_m}.\]
In the next section we will discuss this ring of polynomial invariants.

\section{Polynomial invariants of finite symplectic groups}\label{sec:symplectic invariants}
Let $p$ be a prime and let $V$ be a vector space over $\Ff_p$ of  dimension $2m$ with basis $u_i,v_i$, $1\leq i\leq m$. We endow $V$ with a non-singular symplectic form $\langle-,-\rangle$ given by
\[\langle u_i,v_j\rangle=\delta_{ij},\ \ \langle u_i,u_j\rangle=\langle v_i,v_j\rangle=0.\]
Let $Sp_{2m}(\Ff_p)$ be the symplectic group associated to this symplectic form.
Denote by $\Ff_p[V]$ the $\Ff_p$-algebra of polynomial functions on $V$ taking values in $\Ff_p$. We consider the ring $\Ff_p[V]^{Sp_{2m}(\Ff_p)}$ of $Sp_{2m}(\Ff_p)$-invariant polynomial functions.
 
Let $V^*$ be the dual vector space of $V$ with basis $\xi_i,\eta_i$, $1\leq i\leq m$, dual to $u_i,v_i$. It is easy to verify that
\[f_i:=\sum_{j=1}^m\xi_j^{p^i}\eta_j-\eta_j^{p^i}\xi_j\in \Ff_p[V]^{Sp_{2m}(\Ff_p)},\ \ i\geq 1.\]
Moreover, the Dickson invariants $C_{2m,i}\in \Ff_p[V]^{GL_{2m}(\Ff_p)}$, $1\leq i\leq 2m-1$, lie in $\Ff_p[V]^{Sp_{2m}(\Ff_p)}$, where $(-1)^iC_{2m,i}$ is defined as the coefficient of $X^{p^i}$ in the polynomial
\[\prod_{v\in V^*}(X-v)\in\Ff_p[V][X].\]
Since $V^*$ has $p^{2m}$ elements, $C_{2m,i}$ is a polynomial of degree $p^{2m}-p^{i}$. 

The following theorem was originally proved by Carlisle and Kropholler \cite{CK}. 
\begin{thm}[{\cite[Theorem 8.3.11]{Ben93}}]\label{thm:Ben}
The ring $\Ff_p[V]^{Sp_{2m}(\Ff_p)}$ is generated by the elements
\[C_{2m,m},\ldots,C_{2m,2m-1},\ f_1,\ldots,f_{2m-1}.\]
\end{thm}
\begin{rem}
	The elements in Theorem \ref{thm:Ben} are not algebraically independent. Actually, \cite[Theorem 8.3.11]{Ben93} gives a presentation of $\Ff_p[V]^{Sp_{2m}(\Ff_p)}$ with the above $3m-1$ generators  and $m-1$ relations.
	
	In particular, when $m=1$ we have $f_1=f$ and $C_{2,1}=h$, where $f,h$ are the elements in \eqref{eq:Dickson}.
\end{rem}
If we think of $\Ff_p[V]$ as the polynomial subring of $H^*(BE_m;\Ff_p)$ for the subgroup $E_m\subset PU(p^m)$ defined in Section \ref{subsec:p-subgroups}, then by the definition of $y_{p,i}$ and Lemma \ref{lem:image of chi} (see \cite[Proposition 5.1]{Gu26}), we have
\begin{equation}\label{eq:Phi_m}
	\Phi_m(y_{p,i})=-f_{i+1},\ i\geq 0,
\end{equation}
where $\Phi_m$ is the restriction map defined in Section \ref{subsec:p-subgroups}.

\section{On the mod $3$ cohomology of $BPU(9)$}\label{sec:BPU9}
In this section, we identify  $H^*(BU(n))$ with $\Lambda_n$ via the correspondence \[f(c_1,\ldots,c_n)\leftrightarrow f(\sigma_1,\ldots,\sigma_n).\]
Let $\Phi_0$ be the restriction map of integral cohomology
\[\Phi_0:H^*(BPU(9))\to H^*(BT_{PU(9)}).\]
By Theorem \ref{thm:E_4}, 
\[\im\Phi_0=H^*(BT_{PU(9)})^W\cong K_9\subset\Lambda_9,\]
hence there exists a class $\alpha\in H^*(BPU(9))$ such that $\Phi_0(\alpha)=e$, where $e\in K_9$ is the element constructed in Section \ref{subsec:the element e}. Let
$E_2$ be the maximal nontoral elementary abelian $3$-subgroups of $PU(9)$ defined in Section \ref{subsec:p-subgroups}, and let $\Phi_2$ be the restriction map of mod $3$ cohomology
\[\Phi_2:H^*(BPU(9);\Ff_3)\to H^*(BE_2;\Ff_3).\] 
Write $H^*(BE_2;\Ff_3)=P\otimes Q$, where 
\[P=\Ff_3[\xi_1,\eta_1,\xi_2,\eta_2],\ \ Q=\Lambda_{\Ff_3}[a_1,b_1,a_2,b_2]\]
are the polynomial and exterior subalgebras, respectively. 
\begin{prop}\label{prop:alpha}
	There exists an integer $k$ such that $\Phi_2(\alpha-ky_{3,0}^2y_{3,1})=0$.
\end{prop}
\begin{proof}
Since $\alpha$ has even degree $36$ and $P$ is generated by degree $2$ elements, we can write
\[\Phi_2(\alpha)=\alpha_1+\alpha_2+\alpha_2,\]
where $\alpha_1\in P$, $\alpha_2\in P\otimes Q^2$, $\alpha_3\in P\otimes Q^4$. Furthermore, since $\alpha$ is an integral cohomology class, $\alpha$ maps to zero under the Bockstein. It follows that $\alpha_3=0$ since $Q^4=\Ff_3\{a_1b_1a_2b_2\}$ and $\beta(a_1b_1a_2b_2)\neq 0$. By Theorem \ref{thm:Weyl group}, $\Phi_2(\alpha)\in (P\otimes Q)^{Sp_4(\Ff_3)}$, and since $g:=-I_4\in Sp_4(\Ff_3)$ ($I_4$ is the identity matrix), it follows that
\[g\alpha_2=-\alpha_2=\alpha_2,\]
where the first equality comes from the fact that the degree of the polynomial part of $\alpha_2$ is $34$. Thus, $\alpha_2=0$, and so $\Phi_2(\alpha)\in P^{Sp_4(\Ff_3)}$. By Theorem \ref{thm:Ben}, the only possible element of $P^{Sp_4(\Ff_3)}$ of the same degree as $\alpha$ is a multiple of $f_1^2f_2$. Then, \eqref{eq:Phi_m} gives the desired result. 
\end{proof}

\begin{prop}\label{prop:BPU9 mod 3}
$\chi\alpha\neq 0$ in $H^*(BPU(9);\Ff_3)$. 
\end{prop}
\begin{proof}
We need to show that	$\chi\alpha\notin 3H^*(BPU(9))$. Consider the integral Serre spectral sequence $E_*^{*,*}$ for $BPU(9)$. By definition, $\alpha$ in $ E_\infty^{0,*}\cong K_9$ is $e$, hence the representitive of $\chi\alpha$ in $E_\infty^{3,*}$ is $xe$. Then Proposition \ref{prop:e not in} implies that $\chi\alpha\neq 0$ in $E_\infty^{3,*}$ since $d_3=\nabla$ is the only possible differential with $E_{i}^{3,*}$ ($i\geq 2$) as the target. 
	
	Suppose on the contrary that there exists an element $\lambda\in H^*(BPU(9))$ such that $\chi\alpha=3\lambda$. 
	Then, $\lambda\neq 0$ in the first nontrivial odd column $E_\infty^{3,*}$ since $3\lambda=\chi\alpha\neq 0$ in $E_\infty^{3,*}$.
	Suppose that $xe'$ ($e'\in\Lambda_9$) is a representative of $\lambda$ in $E_\infty^{3,*}$, a subquotient of $E_2^{3,*}\cong x\Lambda_9$. 
	Then there exists $c\in\Lambda_9$ such that 
	$e=3e'+\nabla(c)$.
	Thus, $\Delta(e)=3\Delta(e')+\nabla\Delta(c)$ for the map $\Delta:\Lambda_9\to\Lambda_3$ defined in Section \ref{sec:K_n}. 
	Since $3\nmid\deg(c)$, $\Delta(c)\equiv 0$ mod $3$ by \eqref{eq:diagonal map}. On the other hand, by Corollary \ref{cor:Delta(e)} we have 
	\[\Delta(e)=3(k\delta^3+\nabla(a))\text{ for some }a\in\Lambda_3,\ 3\nmid k.\] 
   Hence, 
  \[3\Delta(e')=3k\delta^3+3\nabla(c')\text{ for some }c'\in\Lambda_3,\]
   and then $\Delta(e')=k\delta^3+\nabla(c')$ in $\Lambda_3$.
  By Proposition \ref{prop:the map phi}, this implies that 
  \begin{equation}\label{eq:lambda}
  	\bar\Delta^*(\lambda)=k\chi\delta^3+\rho\text{ for some }\rho\in\langle\chi\gamma\rangle\subset H^*(BPU(3)),
  \end{equation} 
  where $\bar\Delta^*:H^*(BPU(9))\to H^*(BPU(3))$ is the homomorphism induced by the map $\bar\Delta:BPU(3)\to BPU(9)$ defined in \eqref{eq:bar Delta}.
  
  Let $\Gamma$ be the unique (up to conjugation) maximal nontoral elementary abelian $3$-subgroup of $PU(3)$, and let $i:\Gamma\to PU(3)$ be the inclusion.
  Recall from \eqref{eq:f,h} and the definitions of $s,f,h\in H^*(B\Gamma;\Ff_3)$ that 
  \begin{gather*}
  	(Bi)^*(\chi)=s,\ \ (Bi)^*(\gamma)=f,\ \ (Bi)^*(\delta)=-h,\\ 
  	\beta(s)=\beta(f)=\beta(h)=0,\ \ \beta P^1(s)=f,\ \ P^1(f),\,P^1(h)\in\Ff_3[f,h].
\end{gather*}
  Hence, using \eqref{eq:lambda} and Corollary \ref{cor:cohomology of Gamma}, we have
  \[\beta P^1(Bi)^*\bar\Delta^*(\lambda)=-kfh^3+b\text{ for some }b\in\langle f^2\rangle\subset\Ff_3[f,h],\] 
which is a nonzero element of the polynomial subring of $H^*(BPU(3);\Ff_3)$ since $3\nmid k$. 
  Note that $(Bi)^*\bar\Delta^*=(Bi_1)^*\Phi_2$, where $i_1$ is the embedding of $\Gamma$ into the first factor of $E_2\cong \Gamma\times\Gamma$, i.e., $i_1(g)=(g,0)$. Hence there is an element $\lambda'\in P\subset H^*(BE_2;\Ff_3)$ such that 
  \[(Bi_1)^*(\lambda')=-kfh^3+b.\]
  It is clear that $\lambda'\in P^{W_2}$ for the weyl group $W_2=W_{PU(9)}(E_2)=Sp_{4}(\Ff_3)$.
  Then by Theorem \ref{thm:Ben}, $\lambda'$ must be a multiple of $f_1^3f_2$ for degree reasons, where $f_1,f_2\in P\subset H^*(BE_2;\Ff_3)$ are defined in Section \ref{sec:symplectic invariants}.
  However, it is easy to check that 
  $(Bi_1)^*(f_1)=f$, $(Bi_1)^*(f_2)=fh$.
   So  
   \[(Bi_1)^*(\lambda')=lf^4h \text{ for some } l\in\Ff_3.\] 
   Since $h,f$ are algebraically independent and $k\neq 0$, $lf^4h\neq-kfh^3+b$, a contradiction.
\end{proof}

\section{Proof of Theorem \ref{thm:main}}\label{sec:proof}
Let $\alpha\in H^*(BPU(9))$ be the cohomology class constructed in section \ref{sec:BPU9}. From Proposition \ref{prop:BPU9 mod 3}, we know that $\chi\alpha$ is nonzero in $H^*(BPU(9);\Ff_3)$. We will prove that $\Phi(\chi\alpha)=0$ for the map $\Phi$ in \eqref{eq:map}. 

Let $E_1,E_2\subset PU(9)$ be the two maximal nontoral elementary abelian $3$-subgroups of $PU(9)$ defined Section \ref{subsec:p-subgroups}. From Theorem \ref{thm:Weyl group}, we know that \[\Phi=\Phi_0\oplus\Phi_1\oplus\Phi_2,\] 
where $\Phi_i$ are the restriction maps of mod $3$ cohomolgy:
\begin{gather*}
\Phi_0:H^*(BPU(9);\Ff_3)\to H^*(BT_{PU(9)};\Ff_3),\\
\Phi_1:H^*(BPU(9);\Ff_3)\to H^*(BE_1;\Ff_3),\\
\Phi_2:H^*(BPU(9);\Ff_3)\to H^*(BE_2;\Ff_3).
\end{gather*}
 It is clear that $\Phi_0(\chi\alpha)=0$. Moreover, Proposition \ref{prop:alpha} shows that by rechoosing $\alpha$ if necessary, we have $\Phi_2(\alpha)=0$. So the statement will follow once we prove  that $\Phi_1(\alpha)=0$. Since $\Phi_1(y_{3,0})=-\beta P^1\Phi_1(\chi)$ is not a zero divisor in $H^*(BE_1;\Ff_3)$ by Lemma \ref{lem:image of chi}, it suffices to prove the following proposition. 
\begin{prop}\label{prop:60}
	$y_{3,0}^3\alpha=0$ in $H^*(BPU(9))$.
\end{prop}
This is a consequence of several results on spectral sequence calculations. For the rest of this section, let $E_*^{*,*}$ denote the $\Zz_{(3)}$-cohomology Serre spectral sequence of the fibration $BPU(9)\to K(\Zz,3)$.

First we consider $y_{3,0}\alpha$, a class of degree $44$. Note that $y_{3,0}\in E_\infty^{8,0}$, so we have $y_{3,0}\alpha=0$ in $E^{i,44-i}_\infty$ for $i<8$. 
\begin{lem}\label{lem:(8,36)}
	$y_{3,0}\alpha=0$ in $E_\infty^{8,36}$.
\end{lem}
\begin{proof}
	Recall that $\alpha$ in  $E_\infty^{0,*}=E_4^{0,*}\cong K_9$ is $e$ and that  $E_5^{8,*}=E_4^{8,*}\cong y_{3,0}K_{9,3}$, where $K_{9,3}$ is defined leading up to Lemma \ref{lem:K_n,p}. From Proposition \ref{prop:differential 2p-1}, we know that the differential $d_5:E_5^{3,*}\to E_5^{8,*}$  has the same image as the map $\w\Gamma_2:\Lambda_9\otimes\Ff_3\to \Lambda_9\otimes\Ff_3$. Thus, Proposition \ref{prop:J_{n,p}} will show that $y_{3,0}e\in\im d_5$, which gives the desired result, once we prove that the mod $3$ reduction of $e$ does not contain a monomial in $\sigma_{3k}$'s. 
	Since $e\in K_9$, Lemma \ref{lem:K_n,p} shows that if the mod $3$ reduction of $e$ contain such a monomial, it would be a nonzero multiple of $\sigma_3^6$ or $\sigma_6^3$ mod $3$. However,  Proposition \ref{prop:e} shows that $\Delta(e)\equiv 0$ mod 3, so the coefficient of $\sigma_3^6$ or $\sigma_6^3$ is congruent to $0$ mod $3$ by Lemma \ref{lem:K_p^r}.
\end{proof}
Using Proposition \ref{prop:K(Z,3) finite dim}, we see that in the range of even degrees $10\leq 2i\leq 44$, $H^{2i}(K(\Zz,3))_{(3)}$ are additively generated by
\begin{gather*}
	y_{3,0}^2,\ y_{3,1},\ y_{3,0}^3,\ y_{3,0}y_{3,1},\ xy_{3,(0,1)},\ y_{3,0}^4,\\ 
	y_{3,0}^2y_{3,1},\ xy_{3,0}y_{3,(0,1)},\ y_{3,0}^5,\ y_{3,1}^2,\ y_{3,0}^3y_{3,1}\end{gather*}
with degrees $16,20,24,28,30,32,36,38,40,40,44$ respectively, whcih give the nontrial even columns in this range on the $E_2$-page. 

\begin{lem}\label{lem:44}
	For any integer $n>0$ with $3|n$, the $\Zz_{(3)}$-cohomology Serre spectral sequence of the fibration $BPU(n)\to K(\Zz,3)$ satisfies
\[E_6^{16,28}=E_6^{24,20}=E_6^{28,16}=E_6^{32,12}=E_6^{36,8}=E_4^{30,14}=E_4^{38,6}=E_\infty^{40,4}=0.\]
\end{lem}
\begin{proof}
Consider the differential 
\[d_5:E_5^{11,*}\to E_5^{16,*}=E_4^{16,*}\cong y_{3,0}^2K_{9,3}.\] 
By \eqref{eq:d_2p-1}, it has the same image as $\w\Gamma_2$. Hence, $E_6^{16,28}=0$ follows from Lemma \ref{lem:K_n,p} and Proposition \ref{prop:J_{n,p}} since $9\nmid 28$. $E_6^{24,20}=E_6^{28,16}=E_6^{32,12}=E_6^{36,8}=0$ can be proved in the same way. 

By Proposition \ref{prop:K(Z,3) finite dim},
\begin{gather*}
	E_3^{27,*}\cong xy_{3,0}^3\Lambda_n\oplus y_{3,(0,1)}\Lambda_n\cong (\Lambda_n\oplus\Lambda_n)\otimes\Ff_3,\\
	E_3^{30,*}\cong xy_{3,(0,1)}\Lambda_n\cong\Lambda_n\otimes\Ff_3.
\end{gather*} 
So the differential $d_3:E_3^{27,*}\to E_3^{30,*}$ has the same image as $\w\Gamma_1=\nabla\otimes\Ff_3$ by Proposition \ref{prop:d_3}. Now applying Proposition \ref{prop:I_{n,p}} we have $E_4^{30,14}=0$ since $9\nmid 14$.
Similarly, we have $E_4^{38,6}=0$.

To prove $E_\infty^{40,4}=0$, note that $E_4^{40,4}\cong \Ff_3\{y_{3,0}^5\sigma_1^2,\,y_{3,1}^2\sigma_1^2\}$,
and that $\sigma_1^2$ is congruent to an element in $K_n$ mod $3$, which gives a class $\alpha_0\in H^4(BPU(n))$ (see \cite[p.22 C2]{Gu21}), so it suffices to prove that $y_{3,0}\alpha_0=y_{3,1}\alpha_0=0$ in $H^*(BPU(9))$. 
Since  $y_{3,1}=P^3(y_{3,0})$ by \cite[Proposition 2.7]{Gu21b},
 we need only prove that $y_{3,0}\alpha_0=0$ in $H^{12}(BPU(9))$, and this is true because the $3$-primary subgroup of $H^{12}(BPU(n))$ is zero for any $n>0$ by \cite[Theorem 1.1]{Fan24b}. 
\end{proof}

\begin{lem}\label{lem:(44,0)}
	For any integer $n>0$ with $3|n$, the $\Zz_{(3)}$-cohomology Serre spectral sequence of the fibration $BPU(n)\to K(\Zz,3)$ satisfies
	\[E_\infty^{44,0}=\Ff_3\{y_{3,0}^3y_{3,1}\},\]
	which is a direct summand of $H^{44}(BPU(n))$.
\end{lem}
\begin{proof}
	To show the equation it suffices to show that $y_{3,0}^3y_{3,1}\neq 0$ in $H^*(BPU(n))$ since $E_2^{44,0}=\Ff_3\{y_{3,0}^3y_{3,1}\}$ by the discussion preceding Lemma \ref{lem:44}. First we consider the case $n=3$. Let $\Gamma$ be the unique (up to conjugation) maximal nontoral elementary abelian $3$-subgroup of $PU(3)$, and let $i:\Gamma\to PU(3)$ be the inclusion. In the notation of Theorem \ref{thm:vistoli} and Corollary \ref{cor:cohomology of Gamma}, we have $(Bi)^*(y_{3,0})=-f$ by \eqref{eq:f,h}. Then an easy calculation shows that $(Bi)^*(y_{3,1})=-fh$ using the formula $y_{3,1}=P^3(y_{3,0})$. Thus we get $y_{3,0}^3y_{3,1}\neq 0$ in $H^*(BPU(3))$ since $f,h$ are algebraically independent over $\Ff_3$ in $H^*(B\Gamma)$. For the general case, consider the map $\bar\Delta:BPU(3)\to BPU(n)$ defined in \eqref{eq:bar Delta}. The commutative diagram \eqref{eq:diag} implies that $\bar\Delta^*(y_{3,i})=y_{3,i}$, so $y_{3,0}^3y_{3,1}\neq 0$ in $H^*(BPU(n))$.
	
	The fact that $E_\infty^{44,0}$ is a direct summand of $H^{44}(BPU(n))$ is also a consequence of $(Bi)^*\bar\Delta^*(y_{3,0}^3y_{3,1})\neq 0$ since $3\w H^*(B\Gamma)=0$.
\end{proof}

Combining Lemmas \ref{lem:(8,36)}, \ref{lem:44} and \ref{lem:(44,0)}, we see that
\begin{equation}\label{eq:(20,24)}
	y_{3,0}\alpha\in E_\infty^{20,24}\oplus E_\infty^{44,0}.
\end{equation}

\begin{rem}
	In fact, it is reasonable to expect that $E_\infty^{20,24}=0$, which implies that $y_{3,0}\alpha=0$ since  $\Phi_2(y_{3,0}\alpha)=0$ and $\Phi_2(y_{3,0}^3y_{3,1})\neq 0$ by \eqref{eq:Phi_m}. However, we do not want to do more complicated calculations of the spectral sequence $E_*^{*,*}$ here, and we can avoid this problem by analyzing $y_{3,0}^2\alpha$ and $y_{3,0}^3\alpha$ as follows.
\end{rem}

Next we consider $y_{3,0}^2\alpha$, a class of degree $52$.  By \eqref{eq:(20,24)}, $y_{3,0}^2\alpha=0$ in $E_\infty^{i,52-i}$ for $i<28$. Proposition \ref{prop:K(Z,3) finite dim} shows that in the range of even degrees $28\leq 2i\leq 52$, $H^{2i}(K(\Zz,3))_{(3)}$ are additively generated by
\begin{gather*}
	y_{3,0}y_{3,1},\ xy_{3,(0,1)},\ y_{3,0}^4,\ y_{3,0}^2y_{3,1},\ xy_{3,0}y_{3,(0,1)},\ y_{3,0}^5,\ y_{3,1}^2,\\
	y_{3,0}^3y_{3,1},\ xy_{3,0}^2y_{3,(0,1)},\ y_{3,0}^6,\ y_{3,0}y_{3,1}^2,\ xy_{3,1}y_{3,(0,1)},\ y_{3,0}^4y_{3,1},
\end{gather*}
with degrees $28,30,32,36,38,40,40,44,46,48,48,50,52$ respectively. Similar to the proof of Lemma \ref{lem:44}, one can show that 
\[\begin{split}
	E_6^{28,24}=E_6^{32,20}&=E_6^{36,16}=E_6^{44,8}=E_6^{48,4}\\
	&=E_4^{30,22}=E_4^{38,14}=E_4^{46,6}=E_4^{50,2}=0.
\end{split}\]
Hence, similar to \eqref{eq:(20,24)}, we have 
\begin{equation}\label{eq:(40,12)}
	y_{3,0}^2\alpha\in E_\infty^{40,12}\oplus E_\infty^{52,0}.
\end{equation}

Finally we consider $y_{3,0}^3\alpha$, a class of degree $60$.  By \eqref{eq:(40,12)}, $y_{3,0}^3\alpha=0$ in $E_\infty^{i,60-i}$ for $i<48$. Proposition \ref{prop:K(Z,3) finite dim} shows that  in the range of even degrees $48\leq 2i\leq 60$, $H^{2i}(K(\Zz,3))_{(3)}$ are additively generated by
\begin{gather*}
	y_{3,0}^6,\ y_{3,0}y_{3,1}^2,\ xy_{3,1}y_{3,(0,1)},\ y_{3,0}^4y_{3,1},\ xy_{3,0}^3y_{3,(0,1)},\\
	y_{3,0}^7,\ y_{3,0}^2y_{3,1}^2,\ y_{3,2},\ xy_{3,0}y_{3,1}y_{3,(0,1)},\ y_{3,0}^5y_{3,1},\ y_{3,1}^3		
\end{gather*}
with degrees $48,48,50,52,54,56,56,56,58,60,60$ respectively. Similar to the proof of Lemma \ref{lem:44}, one can show that 
\[
E_6^{48,12}=E_6^{52,8}=E_4^{50,10}=E_4^{54,6}=E_4^{58,2}=E_\infty^{56,4}=0.
\]
Hence $y_{3,0}^3\alpha\in E_\infty^{60,0}$. By \eqref{eq:Phi_m} and \cite[Lemma 8.3.1]{Ben93}, $\Phi_2(y_{3,0})$ and $\Phi_2(y_{3,1})$ are algebraically independent in $H^*(BE_2;\Ff_3)$, it follows that \[E_\infty^{60,0}\cong \Ff_3\{y_{3,0}^5y_{3,1},\,y_{3,1}^3\},\] 
and $y_{3,0}^3\alpha=0$ in $E_\infty^{60,0}$ since $\Phi_2(y_{3,0}^3\alpha)=0$. Thus $y_{3,0}^3\alpha=0$ in $E_\infty^{*,*}$ and the proof of Proposition \ref{prop:60} is complete.

\section{An abelian subgroup  detecting $\chi\alpha$}\label{sec:detect}
In this section, we construct an abelian subgroup of $PU(n)$ for any $n>1$, which is isomorphic to $(\Zz/n)^2$. When $n=9$, we prove that this subgroup can detect the integral cohomology class $\chi\alpha\in H^*(BPU(9))$.

This subgroup is a generalization of the nontoral elementary abelian $p$-subgroup $\Gamma$ of $PU(p)$ defined in Section \ref{subsec:PUp}. It is generated by the quotient images of the two elements of $U(n)$:
\[\tilde\sigma=\begin{pmatrix}
	0& 1\\
	I_{n-1} &0
\end{pmatrix},\quad
\tilde\tau=(\omega,\omega^2,\dots,\omega^{n-1},1)\in T^n,\]
where $\omega=e^{2\pi i/n}$.
Denote this subgroup of $PU(n)$ by $\Gamma(n)$ and let $\sigma$, $\tau$ denote the respective images of $\tilde\sigma$ and $\tilde\tau$ in $PU(n)$. We have $\tilde\tau\tilde\sigma=\omega\tilde\sigma\tilde\tau$, so that $\Gamma(n)\cong(\Zz/n)^2$. Let $i_n:\Gamma(n)\hookrightarrow PU(n)$ be the inclusion map. 

Suppose that $p>2$ is a prime factor of $n$ and that $q=p^r$ is the $p$-primary factor of $n$. Then the mod $q$ cohomology ring of $B\Gamma(n)$ is given by
\[H^*(B\Gamma(n);\Zz/q)\cong\Zz/q[\xi,\eta]\otimes\Lambda_{\Zz/q}[a,b],\]
where $a,b\in H^1(B\Gamma;\Zz/q)\cong \Hom(\Gamma(n),\Zz/q)$ are defined by $a(\sigma)=b(\tau)=1$, $a(\tau)=b(\sigma)=0$, and $\xi=\beta_r(a)$, $\eta=\beta_r(b)$. Here $\beta_r$ is the Bockstein homomorphism associated to the short exact sequence
\[0\to\Zz/q\xr{\cdot q}\Zz/q^2\to\Zz/q\to0.\]
\begin{lem}\label{lem:Gamma(n)}
The image of the generator $\chi$ of $H^3(BPU(n);\Zz/q)\cong \Zz/q$ under the restriction map $(Bi_n)^*:H^*(BPU(n);\Zz/q)\to H^*(\Gamma(n);\Zz/q)$ is  $k(\xi b-\eta a)$ for some $k\in\Zz/q$ with $k\not\equiv 0$ mod $p$.	
\end{lem}

\begin{proof}
	 Let $C\cong\Zz/n$ be the center of $SU(n)$, which is generated by the diagonal matrix $\omega I_n$, and let $c$ be a generator of $H^1(BC;\Zz/q)\cong \Zz/q$. Note that $PU(n)=SU(n)/C$. Let $G\subset SU(n)$ be the subgroup such that there is a group extension
	 \[1\to C\to G\to \Gamma(n)\to 1.\]
	 First we will prove that $d_2(c)=kab$ for some $k\in\Zz/q$ with $k\not\equiv 0$ mod $p$ on the $E_2$-page of the $\Zz/q$-cohomology Serre spectral sequence of the fibration
	 \[BC\to BG\to B\Gamma(n).\]
	 
	 A standard result from group cohomology theory tells us that 
	 \[H^1(BG;\Zz/q)\cong \Hom(G/[G,G],\Zz/q)\cong (\Zz/q)^2,\]
	 where the second isomorphism comes from the fact that $[G,G]=C$. So the kernel of the map $d_2:E_2^{0,1}\to E_2^{2,0}$ is zero, hence $d_2(c)=i\xi+j\eta+kab$ for some $i,j,k\in\Zz/q$ with one of them $\not\equiv 0$ mod $p$. Let $G_1,G_2$ be the subgroups of $G$ generated by $\tilde\sigma$ and $\tilde\tau$ respectively. Then the following compositions 
	 \[\pi_1:BG_1\to BG\to B\Gamma(n),\ \ \pi_2:BG_2\to BG\to B\Gamma(n)\]
	 induce surjections $\pi_1^*$ and $\pi_2^*$ on $\Zz/q$-cohomology and satisfy 
\[\begin{split}
	&\pi_1^*(\xi)=\xi,\ \ \pi_1^*(\eta)=\pi_1^*(ab)=0,\\
	&\pi_2^*(\eta)=\eta,\ \ \pi_2^*(\xi)=\pi_2^*(ab)=0.
\end{split}\]
	  This implies that $i\equiv j\equiv0$ mod $q$ in the above formula for $d_2(c)$, and the statement for $d_2(c)$ follows.
	 
	 Now to prove the lemma, consider the commutative diagram 
	 \[
	 \xymatrix{
	 	BC\ar[r]\ar[d]^{=}&BG\ar[r]\ar[d]&B\Gamma(n)\ar[d]^{Bi_n}\\
	 	BC\ar[r]&BSU(n)\ar[r]&BPU(n)}
	 \]
	 Using the spectral sequence map, we see that $(Bi_n)^*(y)=ab$ for some generator $y$ of $H^2(BPU(n);\Zz/q)\cong\Zz/q$. Since $\beta_r(y)=k\chi$ for some $k\in\Zz/q$ with $k\not\equiv 0$ mod $p$ and $\beta_r(ab)=\xi b-\eta a$, the lemma follows immediately.	 
\end{proof}
\begin{prop}
Let $\alpha\in H^*(BPU(9))$ be the class constructed in section \ref{sec:BPU9}. Then $(Bi_n)^*(\chi\alpha)\neq 0$ in $H^*(B\Gamma(9))$.
\end{prop}
\begin{proof}
Let $G_2\cong\Zz/9$ be the toral subgroup of $U(9)$ generated by $\w\tau$ and let $j:G_2\hookrightarrow T^9$ be the inclusion map. Then the induced map $(Bj)^*$ on integral cohomology can be identified with the ring homomorphism  
\[\Zz[v_1,\dots,v_9]\to\Zz[\eta]/\langle 9\eta\rangle,\ \ v_i\mapsto i\eta.\]
Hence, the image of $\alpha$ under the composition of $H^*(BPU(9))\to H^*(BU(9))$ and $H^*(BU(9))\to H^*(BG_2)$ is nonzero by the formula for $\Theta(e)$ from Proposition \ref{prop:e}. This implies that 
\[(Bi_n)^*(\alpha)\not\equiv 0\bmod I,\]
where $I$ is the kernel of the restriction map $H^*(B\Gamma(9))\to H^*(BG_2)$. Combining this with Lemma \ref{lem:Gamma(n)}, we get the desired result for $(Bi_n)^*(\chi\alpha)$.
\end{proof}

\appendix
\section{On the operators $\w\Gamma_1$ and  $\w\Gamma_{p-1}$}\label{sec:appendix}
In this appendix, $n>2$ is a fixed integer with an odd prime divisor $p$. 
Throughout, $\w\Gamma_i$ are maps $\Lambda_n\otimes\Ff_p\to \Lambda_n\otimes\Ff_p$ defined in Section \ref{sec:differential}.
Here we give some algebraic results on $\w\Gamma_1$ and $\w\Gamma_{p-1}$. Recall that $\w\Gamma_1=\nabla\otimes\Ff_p$.

 Let $I_{n,p}$ (resp. $J_{n,p}$) be the image of $\w\Gamma_1$ (resp. $\w\Gamma_{p-1}$), and let $K_{n,p}$ (resp. $L_{n,p}$) be the kernel of $\w\Gamma_1$ (resp. $\w\Gamma_{p-1}$). By Corollary \ref{cor:zero compsoition},  $J_{n,p}\subset K_{n,p}$ and $I_{n,p}\subset L_{n,p}$.
 
 \begin{lem}\label{lem:J_{n,p}}
 	Suppose that $f\in \Lambda_n\otimes\Ff_p$. If $f$ contains a nonzero monomial  of the form $t\sigma_{i_1}^{j_1}\sigma_{i_2}^{j_2}\cdots\sigma_{i_s}^{j_s}$ ($t\in\Ff_p$) with $p|i_k$ for $1\leq k\leq s$, then $f\notin I_{n,p},J_{n,p}$.
 \end{lem}
 \begin{proof}
 	
 	Using \eqref{eq:gamma}, one can show that
 	$\w\Gamma_i(\sigma_k)=0$ for any $i<p$ and any $k\equiv i$ mod $p$. Combining this with \eqref{eq:mult of gamma}, we see that  for any monomial $\mm\in\Lambda_n\otimes\Ff_p$, $\w\Gamma_{p-1}(\mm)$ does not have a nonzero monomial of the form $t\sigma_{i_1}^{j_1}\sigma_{i_2}^{j_2}\cdots\sigma_{i_s}^{j_s}$  with $p|i_k$ for $1\leq k\leq s$, so $f\notin J_{n,p}$. Also, $f\notin I_{n,p}$ easily follows from \eqref{eq:gamma}.
 \end{proof}
  
The following two propositions for $p=3$ is the main results of this appendix. 
\begin{prop}\label{prop:J_{n,p}}
 Suppose that  $f\in K_{n,3}$. If $f$ does not contain a nonzero monomial of the form $t\sigma_{i_1}^{j_1}\sigma_{i_2}^{j_2}\cdots\sigma_{i_s}^{j_s}$ ($t\in\Ff_3$) with $3|i_k$ for $1\leq k\leq s$, then $f\in J_{n,3}$.
\end{prop}

\begin{prop}\label{prop:I_{n,p}}
	Suppose that $f\in L_{n,3}$. If $f$ does not contain a nonzero monomial of the form $t\sigma_{i_1}^{j_1}\sigma_{i_2}^{j_2}\cdots\sigma_{i_s}^{j_s}$ ($t\in\Ff_3$) with $3|i_k$, $3|j_k$ for $1\leq k\leq s$, then $f\in I_{n,3}$. 
\end{prop}

The proofs of these Propositions require several technical lemmas below. 

We use the term order on monomials in $\Lambda_n\otimes\Ff_p=\Ff_p[\sigma_1,\dots,\sigma_n]$, which satisfies $\sigma_1<\cdots<\sigma_n$. Given a polynomial $f$ in $\Ff_p[\sigma_1,\dots,\sigma_n]$, the \emph{initial term} $\In(f)$ of $f$ is the largest nonzero monomial in $f$ with respect to this term order. 

\begin{lem}\label{lem:general p}
	Given a polynomial $f\in K_{n,p}^+$, write 
	\[\In(f)=t\sigma_{i_1}^{j_1}\sigma_{i_2}^{j_2}\cdots\sigma_{i_s}^{j_s},\ \  0<i_1<\cdots<i_s,\ j_1,\dots,j_s>0,\ t\in\Ff_p.\]
	Assume that $k$ is the least number such that $p\nmid i_k$ (or such $k$ does not exist), then $p$ divides $j_r$ for all $1\leq r<k$ (or for all $1\leq r\leq s$).
\end{lem}
\begin{proof}
	Without loss of generality, we may assume that $t=1$. If $p\nmid j_r$ for some $r<k$, then since $p|n$ and $p|i_r$ for $r<k$, \eqref{eq:nabla} shows that $\w\Gamma_1(\In(f))$ would contain a nonzero monomial \[\mm=(n-i_r+1)j_r\sigma_{i_1}^{j_1}\cdots\sigma_{i_{r-1}}^{j_{r-1}}\sigma_{i_r-1}\sigma_{i_r}^{j_r-1}\sigma_{i_{r+1}}^{j_{r+1}}\cdots\sigma_{i_s}^{j_s}.\]
	Since $f\in K_{n,p}$, it follows that there would exist a monomial $\mm'\neq \In(f)$ in $f$ such that $\w\Gamma_1(\mm')$ contains a monomial that is a nonzero multiple of $\mm$. This together with the fact that $\mm'<\In(f)$ implies that $\mm'$ would have the form
	\[\mm'=\mm_0\sigma_{i_r-1}\sigma_{i_r}^{j_r-1}\sigma_{i_{r+1}}^{j_{r+1}}\cdots\sigma_{i_s}^{j_s},\]
	where $\w\Gamma_1(\mm_0)$ has a monomial of the form $t_0\prod_{l=1}^{r-1}\sigma_{i_l}^{j_l}$ for some $0\neq t_0\in\Ff_p$. This is impossible by Lemma \ref{lem:J_{n,p}} since $p|i_l$ for all $l<r$, and the lemma is proved.
\end{proof}

\begin{lem}\label{lem:p=3}
	Given a  polynomial $f\in K_{n,3}^+$, write \[\In(f)=t \sigma_1^{j_1}\sigma_2^{j_2}\cdots \sigma_n^{j_n},\ \  j_1,\dots,j_n\geq 0,\ t\in\Ff_3.\]
	Suppose that there exists an integer $k$ such that $j_k\neq 0$ and $3\nmid k $, and let $k$ be the least number satisfies this condition. Then we have 
	\begin{enumerate}[(a)]
		\item\label{item:a} If $k\equiv 2$ mod $3$, then $3$ divides $j_k$ and $j_{k+1}$.
		\item\label{item:b} If $k\equiv 1$ mod $3$, then $3$ divides $j_{k+1}$; if in addition $j_k=1$, then $j_{k+2}\not\equiv 2$ mod $3$.
	\end{enumerate}
\end{lem}
\begin{proof}
	Without loss of generality, we may assume that $t=1$. By hypothesis, $\In(f)$ can be rewritten as 
	\begin{equation}\label{eq:in(f)}
		\In(f)=\prod_{i=1}^l\sigma_{3i}^{j_{3i}}\prod_{i=k}^n\sigma_i^{j_i},\ \ l=\lfloor\frac{k}{3}\rfloor.
	\end{equation}
	Suppose that $k\equiv 2$ mod $3$.  If $3\nmid j_k$ (or $3\nmid j_{k+1}$), then by \eqref{eq:nabla}, $\w\Gamma_1(\In(f))$ would contain the nonzero monomial 
	\begin{gather*}
		\mm=(n-k+1)j_k(\prod_{i=1}^l\sigma_{3i}^{j_{3i}})\sigma_{k-1}\sigma_k^{j_k-1}\prod_{i=k+1}^n\sigma_i^{j_i},\\
		(\text{or}\ \mm=(n-k)j_{k+1}(\prod_{i=1}^l\sigma_{3i}^{j_{3i}})\sigma_k^{j_k+1}\sigma_{k+1}^{j_{k+1}-1}\prod_{i=k+2}^n\sigma_i^{j_i}).
	\end{gather*}
	As we have seen in the proof of Lemma \ref{lem:general p}, since $f\in K_{n,3}$ there would exist a monomial  $\mm'<\In(f)$ in $f$ such that $\w\Gamma_1(\mm')$ contains a monomial that is a nonzero multiple of $\mm$, but this is impossible. So $3$ divides $j_k$ and $j_{k+1}$, and part \eqref{item:a} is proved.
	
	The first statement of part \eqref{item:b} can be proved in the same way as for part \eqref{item:a}. For the second statement of part \eqref{item:b}, suppose that $j_k=1$ and $j_{k+2}\equiv 2$ mod $3$. Then $\w\Gamma_1(\In(f))$ would contain the nonzero monomial 
	\[\mm_1=(n-k-1)j_{k+2}(\prod_{i=1}^l\sigma_{3i}^{j_{3i}})\sigma_k\sigma_{k+1}^{j_{k+1}+1}\sigma_{k+2}^{j_{k+2}-1}\prod_{i=k+3}^n\sigma_i^{j_i}.\]
	Since $f\in K_{n,3}$, there would exist a monomial  $\mm_1'<\In(f)$ in $f$ such that $\w\Gamma_1(\mm_1')$ contains a monomial that is a multiple of $\mm_1$. Using \eqref{eq:nabla}, we see that $\mm_1'$ has to be of the following form (up to scalar multiplication)
	\[\mm_1'=(\prod_{i=1}^l\sigma_{3i}^{j_{3i}})\sigma_{k+1}^{j_{k+1}+2}\sigma_{k+2}^{j_{k+2}-1}\prod_{i=k+3}^n\sigma_i^{j_i}.\]
	Since $3\nmid (j_{k+2}-1)$ by assumption, $\w\Gamma_1(\mm_1')$ also contains a
	nonzero monomial
	\[\mm_2=(n-k-1)(j_{k+2}-1)(\prod_{i=1}^l\sigma_{3i}^{j_{3i}})\sigma_{k+1}^{j_{k+1}+3}\sigma_{k+2}^{j_{k+2}-2}\prod_{i=k+3}^n\sigma_i^{j_i},\]
	and it is easy to see that any multiple of $\mm_2$ does not appear in $\w\Gamma_1(\In(f))$. Hence, there would exist a monomial $\mm_2'\neq \mm_1'$, $\In(f)$ in $f$ such that $\w\Gamma_1(\mm_2')$ has a monomial that is a nonzero multiple of $\mm_2$. This is impossible by \eqref{eq:nabla}, since $\mm_2'\neq \mm_1'$ and $\mm_2'<\In(f)$. So the second statement of part \eqref{item:b} holds.
\end{proof}

\begin{lem}\label{lem:gamma_2}
Let $p=3$ and let $\mm=\sigma_k^i\sigma_{k+1}^j\in\Ff_3[\sigma_1,\dots,\sigma_n]$ be a monomial satisfying the two conditions:
\begin{itemize}
\item $k\equiv 1$ mod $3$, or $k\equiv 2$ mod $3$ and $i\equiv 1$ mod 3;
\item $j\equiv 2$ mod $3$.
\end{itemize}
 Then 
$\w\Gamma_2(\mm)=\sigma_k^{i+2}\sigma_{k+1}^{j-2}$.
\end{lem}
\begin{proof}
	By \eqref{eq:mult of gamma},
	\begin{equation}\label{eq:gamma_2}
	\w\Gamma_2(\mm)=\w\Gamma_2(\sigma_k^i)\sigma_{k+1}^j+\w\Gamma_1(\sigma_k^i)\w\Gamma_1(\sigma_{k+1}^j)+\sigma_k^i\w\Gamma_2(\sigma_{k+1}^j).
	\end{equation}
	Since $3|n$, by \eqref{eq:gamma}
    \begin{equation}\label{eq:gamma for p=3}
    	\w\Gamma_1(\sigma_k)=\begin{cases}\sigma_{k-1},&k\equiv0\text{ mod }3,\\ 0,&k\equiv1\text{ mod }3,\\
    	-\sigma_{k-1},&k\equiv2\text{ mod }3,
    \end{cases}\quad 
    \w\Gamma_2(\sigma_k)=\begin{cases}
    \sigma_{k-2},&k\equiv0\text{ mod }3,\\
    0,&k\equiv1,2\text{ mod }3.
    \end{cases}
   \end{equation}	
	Thus, if $k\equiv 1$ mod $3$ and $j\equiv 2$ mod $3$,  \eqref{eq:gamma for p=3}, together with \eqref{eq:mult of gamma} and \eqref{eq:pth power}, gives
	\[\w\Gamma_1(\sigma_k^i)=\w\Gamma_2(\sigma_k^i)=0,\quad
	\w\Gamma_2(\sigma_{k+1}^j)=\sigma_k^2\sigma_{k+1}^{j-2}.
	\]
	Similarly, if $i\equiv 1$ mod 3 and $j,k\equiv 2$ mod 3, we have
	\[\begin{split}\w\Gamma_1(\sigma_k^i)&=-\sigma_{k-1}\sigma_k^{i-1},\quad \w\Gamma_2(\sigma_k^i)=0,\\
		\w\Gamma_1(\sigma_{k+1}^j)&=-\sigma_k\sigma_{k+1}^{j-1},\quad \w\Gamma_2(\sigma_{k+1}^j)=\sigma_k^2\sigma_{k+1}^{j-2}-\sigma_{k-1}\sigma_{k+1}^{j-1}.
	\end{split}\]
	Combining these with \eqref{eq:gamma_2}, we get the desired equation for both cases.
\end{proof}

Now we are ready to prove  Proposition \ref{prop:J_{n,p}}.
\begin{proof}[Proof of Proposition \ref{prop:J_{n,p}}]
By hypothesis and Lemma \ref{lem:general p}, up to scalar multiplication, there exists $k\leq n$ with $3\nmid k$ such that
\[\In(f)=\prod_{i=1}^l\sigma_{3i}^{j_i}\prod_{i=k}^n\sigma_i^{j_i},\ \ l=\lfloor\frac{k}{3}\rfloor,\]
where $3\nmid k$, $j_k>0$ and $3|j_i$ for $1\leq i\leq l$. 

\emph{Case A.} If $k\equiv 2$ mod $3$, then $3$ divides $j_k$ and $j_{k+1}$ by Lemma \ref{lem:p=3} \eqref{item:a}. Let 
\[\mm=(\prod_{i=1}^l\sigma_{3i}^{j_i})\sigma_k^{j_k-2}\sigma_{k+1}^{j_{k+1}+2}\prod_{i=k+2}^n\sigma_i^{j_i}.\]
By \eqref{eq:mult of gamma}, \eqref{eq:pth power} and lemma \ref{lem:gamma_2},
\[\w\Gamma_2(\mm)=\In(f)+h_1+h_2,\]
where \[\begin{split}
h_1&=(\prod_{i=1}^l\sigma_{3i}^{j_i})\w\Gamma_1(\sigma_k^{j_k-2}\sigma_{k+1}^{j_{k+1}+2})\w\Gamma_1(\prod_{i=k+2}^n\sigma_i^{j_i}),\\ h_2&=(\prod_{i=1}^l\sigma_{3i}^{j_i})\sigma_k^{j_k-2}\sigma_{k+1}^{j_{k+1}+2}\w\Gamma_2(\prod_{i=k+2}^n\sigma_i^{j_i}).
\end{split}\]
It is clear that $\In(h_1),\,\In(h_2)<\In(f)$. 
Let $f_1=f-\w\Gamma_2(\mm)$. Then $\In(f_1)<\In(f)$. Since $J_{n,3}\subset K_{n,3}$ and $f\in K_{n,3}$, $f_1\in K_{n,3}$. Furthermore, Lemma \ref{lem:J_{n,p}} shows that $\w\Gamma_2(\mm)$ does not contain a monomial of the form $t\sigma_{i_1}^{j_1}\sigma_{i_2}^{j_2}\cdots\sigma_{i_s}^{j_s}$ with $p|i_k$ for $1\leq k\leq s$, so that $f_1$ satisfies the same conditions in the proposition as $f$. 

\emph{Case B.} If $k\equiv 1$ mod 3, then $3|j_{k+1}$ by Lemma \ref{lem:p=3} \eqref{item:b}. We further consider two subcases:

\emph{Subcase B.1.} $j_k>1$. Let $\mm$ be as above. (Since $j_k-2\geq 0$, $\mm$ is well-defined.) Then the same arguments as for Case A show that $f_1:=f-\w\Gamma_2(\mm)$ satisfies the same conditions in the proposition as $f$ and $\In(f_1)<\In(f)$.

\emph{Subcase B.2.} $j_k=1$. Let
\[\mm=(\prod_{i=1}^l\sigma_{3i}^{j_i})\sigma_{k+1}^{j_{k+1}}\sigma_{k+2}^{j_{k+2}+1}\prod_{i=k+3}^n\sigma_i^{j_i}.\]
Since $3$ divides $j_{k+1}$ and $k+2$, \eqref{eq:gamma}, \eqref{eq:mult of gamma} and \eqref{eq:pth power} show that
\[\w\Gamma_2(\mm)=(j_{k+2}+1)\In(f)+\mm_0+h_1+h_2,\]
where \[\begin{split}
	\mm_0&=\binom{j_{k+2}+1}{2}(\prod_{i=1}^l\sigma_{3i}^{j_i})\sigma_{k+1}^{j_{k+1}+2}\sigma_{k+2}^{j_{k+2}-1}\prod_{i=k+3}^n\sigma_i^{j_i},\\
	h_1&=(\prod_{i=1}^l\sigma_{3i}^{j_i})\w\Gamma_1(\sigma_{k+1}^{j_{k+1}}\sigma_{k+2}^{j_{k+1}+1})\w\Gamma_1(\prod_{i=k+3}^n\sigma_i^{j_i}),\\ h_2&=(\prod_{i=1}^l\sigma_{3i}^{j_i})\sigma_{k+1}^{j_{k+1}}\sigma_{k+2}^{j_{k+2}+1}\w\Gamma_2(\prod_{i=k+3}^n\sigma_i^{j_i}).
\end{split}\]
Here $\mm_0=0$ if $j_{k+2}=0$. Clearly, $\mm_0,\In(h_1),\In(h_2)<\In(f)$. Since $3\nmid(j_{k+2}+1)$ by Lemma \ref{lem:p=3} \eqref{item:b}, we get a polynomial $f_1:=f-(j_{k+2}+1)^{-1}\w\Gamma_2(\mm)$ which satisfies the same conditions in the proposition as $f$ and $\In(f_1)<\In(f)$.

Doing this process repeatedly we get a sequence of  polynomials \[f=f_0,f_1,f_2,\dots,\] 
such that $\In(f_i)>\In(f_{i+1})$. Since the number of monomials (up to scalar multiplication) in a finite range of degrees is finite, we finally get $f_j=0$ for some $j$. Since $f_i-f_{i+1}\in J_{n,3}$ by construction, we have  \[f=\pm\w\Gamma_2(\mm)+\sum_{i=1}^{j-1}(f_i-f_{i+1})\in J_{n,3}.\]
\end{proof}

To state the next lemma, we use $\en(f)$ to denote the smallest nonzero monomial  in a polynomial $f\in\Ff_p[\sigma_1,\dots,\sigma_n]$ with respect to the term order given above. 
\begin{lem}\label{lem:ending term}
	Given a polynomial $f\in L_{n,3}^+$, write 
	\[\en(f)=t\sigma_1^{j_1}\sigma_2^{j_2}\cdots\sigma_n^{j_n},\ \  j_1,\dots,j_n\geq 0,\ t\in\Ff_3.\]
	 If $3\nmid j_k$ for some $k\equiv0$ mod $3$ and $j_i=0$ for any $i>k$ with $3\nmid i$, then $j_k\equiv 1$ mod $3$ and $j_{k-1}\neq0$.
\end{lem}
\begin{proof}
	We may assume without loss of generality that $t=1$. Since $3|k$, if $j_k\equiv 2$ mod $3$, then by \eqref{eq:gamma} and \eqref{eq:mult of gamma}, $\w\Gamma_2(\en(f))$ would contain the  monomial
	\[\mm=(\prod_{i=1}^{k-2}\sigma_i^{j_i})\sigma_{k-1}^{j_{k-1}+2}\sigma_k^{j_k-2}\prod_{i>k,\,3|i}\sigma_i^{j_i}.\]
	Since $f\in L_{n,3}$, there would exist a monomial  $\mm'\neq\en(f)$ in $f$ such that $\w\Gamma_2(\mm')$ contains a monomial that is a nonzero multiple of $\mm$. However, using \eqref{eq:gamma} and \eqref{eq:mult of gamma}, one sees easily that any such $\mm'\neq\en(f)$ satisfies $\mm'<\en(f)$, contradicting the definition of $\en(f)$. So $j_k\equiv 1$ mod $3$.
	
	If $j_{k-1}=0$, then by \eqref{eq:gamma} and \eqref{eq:mult of gamma}, $\w\Gamma_2(\en(f))$ would  contain the monomial 
	\[\mm_1=(\prod_{i=1}^{k-3}\sigma_i^{j_i})\sigma_{k-2}^{j_{k-2}+1}\sigma_k^{j_k-1}\prod_{i>k,\,3|i}\sigma_i^{j_i},\]
	since $3|k$ and $j_k\equiv 1$ mod $3$.
	Hence, there would exist a monomial $\mm'_1\neq\en(f)$ in $f$ such that $\w\Gamma_2(\mm'_1)$ contains a monomial that is a nonzero multiple of $\mm_1$. But any such $\mm_1'\neq\en(f)$ satisfies $\mm_1'<\en(f)$ by \eqref{eq:gamma} and \eqref{eq:mult of gamma}, a contradiction again, which completes the proof of the lemma.
\end{proof}

\begin{proof}[Proof of Proposition \ref{prop:I_{n,p}}]
	By hypothesis, there exist integers $k,\,j_k>0$ with $3\nmid k$ or $3\nmid j_k$, such that  $\en(f)$ can be written (up to scalar multiplication) as 
	\[\en(f)=\prod_{i=1}^k\sigma_i^{j_i}\prod_{i=l}^{n/3}\sigma_{3i}^{j_i},\ \ l=\lfloor \frac{k}{3}\rfloor+1,\] 
	where $3|j_i$ for $l\leq i\leq n/3$. 
	
	\emph{Case A.} If $3\nmid k$, let
	\[\mm=(\prod\limits_{i=1}^{k-1}\sigma_i^{j_i})\sigma_k^{j_k-1}\sigma_{k+1}\prod\limits_{i=l}^{n/3}\sigma_{3i}^{j_i}.\] 
	By \eqref{eq:gamma} and \eqref{eq:mult of gamma}, 
	\[\w\Gamma_1(\mm)=
		(n-k)\en(f)+h,\]
	where 
	\[h=
		\w\Gamma_1((\prod_{i=1}^{k-1}\sigma_i^{j_i})\sigma_k^{j_k-1})\sigma_{k+1}\prod_{i=l}^{n/3}\sigma_{3i}^{j_i}.\] 
	It is easy to see that if $h\neq 0$, then $\en(h)>\en(f)$. Lemma \ref{lem:J_{n,p}} shows that $\w\Gamma_1(\mm)$ does not contain a monomial of the form $t\sigma_{i_1}^{j_1}\sigma_{i_2}^{j_2}\cdots\sigma_{i_s}^{j_s}$ with $p|i_k$ for $1\leq k\leq s$, and since $I_{n,3}\subset L_{n,3}$, the polynomial $f_1=f-(n-k)^{-1}\w\Gamma_1(\mm)$ satisfies the same conditions in the proposition as $f$ and $\en(f_1)>\en(f)$ if $f_1\neq 0$.
	
	\emph{Case B.} If $3|k$ and $3\nmid j_k$, then $j_k\equiv 1$ mod $3$ and $j_{k-1}\neq 0$ by Lemma \ref{lem:ending term}. Let \[\mm=(\prod_{i=1}^{k-2}\sigma_i^{j_i})\sigma_{k-1}^{j_{k-1}-1}\sigma_k^{j_k+1}\prod_{i=l}^{n/3}\sigma_{3i}^{j_i}.\] 
	Then
	\[\w\Gamma_1(\mm)=-\en(f)+h,\]
	where \[h=\w\Gamma_1((\prod_{i=1}^{k-2}\sigma_i^{j_i})\sigma_{k-1}^{j_{k-1}-1})\sigma_k^{j_k+1}\prod_{i=l}^{n/3}\sigma_{3i}^{j_i}.\] 
Hence, arguing as in Case A, the polynomial $f_1=f+\w\Gamma_1(\mm)$ satisfies the same conditions in the proposition as $f$ and $\en(f_1)>\en(f)$ if $f_1\neq 0$.
	
	As we saw in the proof of Proposition \ref{prop:J_{n,p}}, repeating this process finitely many times yields a sequence of polynomials 
	\[f=f_0,f_1,f_2,\dots,f_j=0,\ \  \en(f_{i-1})<\en(f_i) \text{ for } 1\leq i<j.\]
	It follows that $f=\pm\w\Gamma_1(\mm)+\sum_{i=1}^{j-1}(f_i-f_{i+1})\in I_{n,3}$ since $f_i-f_{i+1}\in I_{n,3}$ by construction, and the proof of the proposition is complete. 
\end{proof}

\section*{Acknowledgment}
The author would like to thank Antonio Viruel for a conversation about the history of Adams' conjecture. The author also thank the reviewer for pointing out an error in an earlier version of this paper.

\bibliography{M-A}
\bibliographystyle{amsplain}
\end{document}